  \theoremstyle{definition}
  \newtheorem{defn}{Definition}[section]
  \theoremstyle{remark}
   \newtheorem{remark}[defn]{Remark}
   \theoremstyle{plain}
  \newtheorem{theorem}[defn]{Theorem}
  \newtheorem{lemma}[defn]{Lemma}
  \newtheorem{corollary}[defn]{Corollary}
    \newtheorem{example}[defn]{Example}
\begin{document}

\def\AW[#1]^#2_#3{\ar@{-^>}@<.5ex>[#1]^{#2} \ar@{_<-}@<-.5ex>[#1]_{#3}}
 \def\NAW[#1]{\ar@{-^>}@<.5ex>[#1] \ar@{_<-}@<-.5ex>[#1]}

 \renewcommand{\arraystretch}{1}
 \renewcommand{\O}{\bigcirc}
 \newcommand{\OX}{\bigotimes}
 \newcommand{\OD}{\bigodot}
 \newcommand{\OV}{\O\llap{v\hspace{.6ex}}}
 \newcommand{\B}{\mbox{\Huge $\bullet$}}
  \newcommand{\D}{$\diamond$}

\title{Weight modules of $D(2,1,\alpha)$}
\author{Crystal Hoyt\footnote{Department of Mathematics, Technion - Israel Institute of Technology, Haifa 32000, Israel; hoyt@tx.technion.ac.il.}}
\date{July 30, 2013}
\maketitle

\begin{abstract}
Let $\mathfrak{g}$ be a basic Lie superalgebra.  A weight module $M$ over $\mathfrak{g}$ is called
finite if all of its weight spaces are finite dimensional, and it is
called bounded if there is a uniform bound on the dimension of a
weight space.  The minimum bound is called the degree of $M$.
For $\mathfrak{g} = D(2,1,\alpha)$, we prove that every simple weight module $M$ is bounded and has degree less than or equal to $8$. This bound is attained by a cuspidal module $M$ if and only if $M$ belongs to a $(\mathfrak{g},\mathfrak{g}_{\bar{0}})$-coherent family  $L(\lambda)_{\Gamma}^{\mu}$ for some typical module $L(\lambda)$. Cuspidal modules which correspond to atypical modules have degree less than or equal to $6$ and greater than or equal to $2$.
\end{abstract}

\section{Introduction}
Basic Lie superalgebras are a natural generalization of simple finite dimensional Lie algebras, and their
finite dimensional modules have been studied extensively \cite{K77, M12}. In this case, every simple $\mathfrak{g}$-module is a highest weight module with respect to each choice of simple roots of $\mathfrak{g}$, and moreover, there exist various character formulas which help one ``count'' the dimension of each weight space of the module.
A natural generalization of this setting is to the study of (possibly) infinite dimensional modules that have finite dimensional weight spaces, namely, finite weight modules.  However, since these modules are not necessarily highest weight with respect to any choice of the set of simple roots, the question arises how to characterize and classify all such simple modules.

In \cite{M00}, Mathieu gave an answer to this problem for simple Lie algebras
by relating to each simple finite weight module $M$ a corresponding simple highest weight module $L(\lambda)$ such that $M$ is the ``twisted localization'' of $L(\lambda)$, that is, $M\cong L(\lambda)_{\Gamma}^{\mu}$.
Grantcharov extended this result to cover classical Lie superalgebras in \cite{Gr09}.
Using this characterization one can gain information about a simple finite weight module $M$ from the corresponding simple highest weight module $L(\lambda)$, including the calculation of its degree (see (\ref{degree})).
Moreover, this is a major step towards the classification of simple finite weight modules.

In addition, one must determine which simple highest weight modules $L(\lambda)$ can appear in this correspondence. These are the modules which have uniformly bounded weight multiplicities, the so called ``bounded modules''.  Then one should determine the simplicity conditions for the modules $L(\lambda)_{\Gamma}^{\mu}$ and the isomorphisms between them.  For simple Lie algebras this problem was completely solved by Mathieu in \cite{M00}, but the general problem remains open for basic Lie superalgebras.
For modules with a strongly typical central character, this description can be derived from results of Gorelik, Penkov and Serganova \cite{G01, P94, PS92}, however the situation is not surprisingly more difficult when the central character is atypical.

One can further reduce the classification problem to that of classifying ``cuspidal modules'' using Theorem~\ref{cusp}  (Fernando  \cite{F90}; Dimitrov, Mathieu, Penkov \cite{DMP}). A cuspidal module is a simple finite weight $\mathfrak{g}$-module that is not parabolically induced from any proper parabolic subalgebra $\mathfrak{p}\subset\mathfrak{g}$.  These modules can be characterized in terms of their support and in terms of the action of $\mathfrak{g}$.

In this paper, we focus on the family of Lie superalgebras $D(2,1,\alpha)$ which are defined by one complex parameter $\alpha\in\mathbb{C}\setminus\{0,-1\}$ and study their finite weight modules.  For $\mathfrak{g}=D(2,1,\alpha)$, every simple weight module is a finite weight module, and moreover it is bounded!  Indeed, here $\mathfrak{g}_{\bar{0}}=\mathfrak{sl}_2\times\mathfrak{sl}_2\times\mathfrak{sl}_2$, so a simple weight module $V$ for $\mathfrak{g}_{\bar{0}}$ is just the tensor product $V=V_1\otimes V_2\otimes V_3$ of simple $\mathfrak{sl}_2$ weight modules.  Now each simple $\mathfrak{sl}_2$ weight module $V_i$ must have one dimensional weight spaces, since the Casimir element $h^2+2h+fe$ acts by a scalar. Hence, $V$ also has one dimensional weight spaces.  Now any simple weight module of $\mathfrak{g}$ can be realized as the quotient of an induced module $\mbox{Ind}_{\mathfrak{g}_{\bar{0}}}^{\mathfrak{g}}V=U(\mathfrak{g})\otimes_{\mathfrak{g}_{\bar{0}}}V$, where $V$ is a simple $\mathfrak{g}_{\bar{0}}$ weight module.  The claim then follows from the fact that $U(\mathfrak{g}_{\bar{1}})$ is finite dimensional (here $\mbox{dim }U(\mathfrak{g}_{\bar{1}})=256$).

For $\mathfrak{g}=D(2,1,\alpha)$, we prove that every Verma module is bounded and has degree less than or equal to $8$.  It then follows from a result of Grantcharov that simple (finite) weight modules are also bounded of degree less than or equal to $8$. We show that the dimensions of the weight spaces of a cuspidal $\mathfrak{g}$-module are constant on $Q_{\bar{0}}$-cosets and we calculate this degree.  We prove that a cuspidal $\mathfrak{g}$-module has degree $8$ if and only if it is ``typical''. We prove that if $M$ is an ``atypical'' cuspidal $\mathfrak{g}$-module then $2\leq\mbox{deg }M\leq 6$.
We determine the conditions on $\lambda$ and $\Gamma$ that are necessary for $L(\lambda)_{\Gamma}^{\mu}$ to be a cuspidal module. It remains to determine the restrictions on $\mu\in\mathbb{C}^{3}$ and isomorphisms between these modules.

\section{Basic Lie superalgebras}

A simple finite dimensional Lie superalgebra $\mathfrak{g}=\mathfrak{g}_{\bar{0}}\oplus\mathfrak{g}_{\bar{1}}$ is called {\em basic} if  $\mathfrak{g}_{\bar{0}}$ is a reductive Lie algebra,
and there exists an even non-degenerate (symmetric) invariant bilinear form on $\mathfrak{g}$.
These are the Lie superalgebras: $\mathfrak{sl}(m|n)$ for $m\neq n$, $\mathfrak{psl}(n|n)$, $\mathfrak{osp}(m|2n)$, $F(4)$, $G(3)$ and $D(2,1,\alpha)$, $a\in\mathbb{C}\setminus \{0,-1\}$, and finite-dimensional simple Lie algebras.  A basic Lie superalgebra can be represented by a Dynkin diagram, though not uniquely.

Let $\mathfrak{g}$ be a basic Lie superalgebra, and fix a Cartan subalgebra $\mathfrak{h}\subset\mathfrak{g}_{\bar{0}}\subset\mathfrak{g}$. We have a root space decomposition $\mathfrak{g}=\mathfrak{h}\oplus\bigoplus_{\alpha\in\Delta}\mathfrak{g}_{\alpha}$. For each set of simple roots $\Pi\subset\Delta$, we have a corresponding set of positive roots $\Delta^{+}=\Delta^{+}_{\bar{0}}\cup\Delta^{+}_{\bar{1}}$ and a triangular decomposition $\mathfrak{g}=\mathfrak{n}^{-}\oplus\mathfrak{h}\oplus\mathfrak{n}^{+}$. This induces a triangular decomposition of $\mathfrak{g}_{\bar{0}}$, namely, $\mathfrak{g}_{\bar{0}}=\mathfrak{n}_{\bar{0}}^{-}\oplus\mathfrak{h}\oplus\mathfrak{n}_{\bar{0}}^{+}$.
Let $\Pi_{\bar{0}}$ denote the corresponding set of simple roots for $\mathfrak{g}_{\bar{0}}$.
The root lattice of $\mathfrak{g}$ (resp. $\mathfrak{g}_{\bar{0}}$) is defined to be
$Q=\sum_{\alpha\in\Pi} \mathbb{Z} \alpha$ (resp. $Q_{\bar{0}}=\sum_{\alpha\in\Pi_{\bar{0}}} \mathbb{Z} \alpha$).
Let $\rho_{0}=\frac{1}{2}\sum_{\alpha\in\Delta^{+}_{\bar{0}}}\alpha$, $\rho_{1}=\frac{1}{2}\sum_{\alpha\in\Delta^{+}_{\bar{1}}}\alpha$ and $\rho=\rho_{0}-\rho_{1}$. Denote by $U(\mathfrak{g})$ (resp. $U(\mathfrak{g}_{\bar{0}})$) the universal enveloping algebra of  $\mathfrak{g}$ (resp. $\mathfrak{g}_{\bar{0}}$).  See \cite{K77, M12} for definitions and more details.

\subsection{Finite weight modules}

A $\mathfrak{g}$-module $M$ is called a {\em weight module} if it decomposes into a direct sum of weight spaces $M=\oplus_{\mu\in\mathfrak{h}^{*}} M_{\mu}$, where $M_{\mu}= \{m\in M \mid h.m=\mu(h)m \text{ for all }h\in\mathfrak{h}\}$. A weight module $M$ is called {\em finite} if $\mbox{dim }M_{\mu} < \infty$ for all $\mu\in\mathfrak{h}^{*}$. Define the {\em support of $M$} to be the set $$\mbox{Supp }M=\{\mu\in\mathfrak{h}^{*}\mid \mbox{dim }M_{\mu}\neq 0\}.$$

The module $M$ is called {\em bounded} if the exists a constant $c$ such that $\mbox{dim }M_{\mu} < c$ for all $\mu\in\mathfrak{h}^{*}$.  Recall that a $\mathfrak{g}$-module $M=M_{\bar{0}}\oplus M_{\bar{1}}$ is also $\mathbb{Z}/2\mathbb{Z}$-graded.  The {\em degree of M} is defined to be
\begin{equation}\label{degree}
\mbox{deg }M=max_{\mu\in\mathfrak{h}^*} \mbox{dim}(M)_{\mu},
\end{equation}
and we define the {\em graded degree of M} to be $(d_{0},d_{1})$,
where $$d_i=max_{\mu\in\mathfrak{h}^*} \mbox{dim}(M_{\bar{i}})_{\mu} \text{ for }i\in\{0,1\}.$$

\begin{remark}
Clearly, $\mbox{max}\{d_0,d_1\} \leq \mbox{deg }M\leq d_0+d_1$. However, if $M$ is a weight module that can be generated by a single weight vector (i.e. simple or highest weight module), then each weight space of $M$ is either purely even or purely odd, and so in this case $\mbox{deg }M=\mbox{max}\{d_0,d_1\}$.
\end{remark}

Let $M(\lambda)$ denote the Verma module of highest weight $\lambda\in\mathfrak{h}^{*}$ with respect to a set of simple roots $\Pi$, and let $L(\lambda)$ denote its unique simple quotient \cite{M12}. It is clear that $M(\lambda)$ and $L(\lambda)$ are finite weight modules, but they are not always bounded.

For each $\beta\in\Pi$ an odd isotropic root (i.e. $(\beta,\beta)=0$), we have an odd reflection of the set of simple roots $r_{\beta}:\Pi \rightarrow \Pi'$ satisfying $\Pi'=(\Pi\setminus\{\beta\})\cup\{-\beta\}$ \cite{LSS}. Moreover, for a simple highest weight module $L_{\Pi}(\lambda)$ there exists $\lambda'\in\mathfrak{h}^*$ such that $L_{\Pi'}(\lambda')=L_{\Pi}(\lambda)$.  In particular, $\lambda'=\lambda-\beta$ if $(\lambda,\beta)\neq 0$, while $\lambda'=\lambda$ otherwise  \cite{KW}.  Using even and odd reflections one can move between all the different choices of simple roots for a basic Lie superalgebra $\mathfrak{g}$ \cite{S11}.  Moreover, one can move between two different Dynkin diagrams of $\mathfrak{g}$ using only odd reflections.

A simple highest weight module $L(\lambda)$ is called {\em typical} if $(\lambda+\rho,\alpha)\neq 0$ for all $\alpha\in\Delta_{\bar{1}}$, and {\em atypical} otherwise. The notion of typicality is preserved by an odd reflection of the set of simple roots, that is, given an odd reflection $r_{\beta}:\Pi \rightarrow \Pi'$ and $L_{\Pi'}(\lambda')=L_{\Pi}(\lambda)$, then $L_{\Pi'}(\lambda')$ is typical iff $L_{\Pi}(\lambda)$ is typical \cite{KW, S11}.

It was shown by Penkov and Serganova that if $\mathfrak{g}$ is a basic Lie superalgebra, then the category of representations of $\mathfrak{g}$ with a fixed generic typical central character is equivalent to the category of representations of $\mathfrak{g}_{\bar{0}}$ with a certain corresponding central character \cite{P94, PS92}.
This equivalence of categories was extended to representations of $\mathfrak{g}$ with a fixed strongly typical central character by Gorelik in \cite{G01}.  In the case that the root system of $\mathfrak{g}$ is reduced ($\alpha,k\alpha\in\Delta$ implies $k=\pm 1$)  all typical central characters are strongly typical.

\subsection{Cuspidal modules}

A {\em $\mathbb{Z}$-grading} of $\mathfrak{g}$ is a decomposition $ \mathfrak{g}=\oplus_{j\in\mathbb{Z}} \mathfrak{g}(j)$ satisfying $[\mathfrak{g}(i),\mathfrak{g}(j)]\subset\mathfrak{g}(i+j)$ and $\mathfrak{h}\subset\mathfrak{g}(0)$.
A subalgebra $\mathfrak{p}\subset\mathfrak{g}$ is called a {\em parabolic subalgebra} if there exists a $\mathbb{Z}$-grading of $\mathfrak{g}$ such that $\mathfrak{p}=\oplus_{j\geq 0} \mathfrak{g}(j)$.  In this case, $\mathfrak{l}=\mathfrak{g}(0)$ is a {\em Levi subalgebra} and $\mathfrak{n}=\oplus_{j\geq 1} \mathfrak{g}(j)$ is the nilradical of $\mathfrak{p}$.

Let $\mathfrak{p}=\mathfrak{l}\oplus\mathfrak{n}$ be a parabolic subalgebra of $\mathfrak{g}$, and let $S$ be a simple $\mathfrak{p}$-module.  Then $M_{\mathfrak{p}}(S):=\mbox{Ind}_{\mathfrak{p}}^{\mathfrak{g}}S$ has a unique simple quotient $L_{\mathfrak{p}}(S)$.  The module $L_{\mathfrak{p}}(S)$ is said to be {\em parabolically induced}. A simple $\mathfrak{g}$-module is called {\em cuspidal} if it is not parabolically induced from any proper parabolic subalgebra $\mathfrak{p}\subset\mathfrak{g}$.

\begin{theorem}[Fernando  \cite{F90}; Dimitrov, Mathieu, Penkov \cite{DMP}]\label{cusp} Let $\mathfrak{g}$ be a basic Lie superalgebra. Any simple finite weight $\mathfrak{g}$-module is obtained by parabolic induction from a cuspidal module of a Levi subalgebra.
\end{theorem}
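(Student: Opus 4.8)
The plan is to reconstruct the intrinsic parabolic subalgebra attached to $M$ by Fernando and by Dimitrov--Mathieu--Penkov, and then to exhibit $M$ as induced from its Levi part. Throughout fix, for each $\alpha\in\Delta$, a root vector $X_\alpha\in\mathfrak{g}_\alpha$.

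\emph{Step 1 (a dichotomy).} First I would show that on a simple weight module $M$ every $X_\alpha$ acts either locally nilpotently or injectively. The key point is that $M^{[\alpha]}:=\{v\in M:\ X_\alpha^{n}v=0 \text{ for some }n\}$ is a $\mathfrak{g}$-submodule: since $\operatorname{ad}X_\alpha$ shifts the $\alpha$-grading of the finite dimensional $\mathfrak{g}$, one has $(\operatorname{ad}X_\alpha)^{k}(Y)=0$ for $k\gg 0$ for every root vector $Y$, and expanding $X_\alpha^{m}(Y v)$ by the (super) Leibniz rule shows $Y\,M^{[\alpha]}\subset M^{[\alpha]}$. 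As $M$ is simple, $M^{[\alpha]}$ is $0$ or $M$; if it is $0$ then $\ker X_\alpha=0$, i.e. $X_\alpha$ is injective. For $\alpha$ odd one runs the same argument using $X_\alpha^{2}\in\mathfrak{g}_{2\alpha}$.

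\emph{Step 2 (the combinatorial core).} Let $\Sigma=\{\alpha\in\Delta:\ X_\alpha \text{ acts locally nilpotently on }M\}$. A commutator estimate as above, now applied to $[X_\alpha,X_\beta]$, shows $\Sigma$ is closed: if $\alpha,\beta\in\Sigma$ and $\alpha+\beta\in\Delta$ then $\alpha+\beta\in\Sigma$. I would then set $P=\{\alpha\in\Delta:\ \alpha\in\Sigma\text{ or }-\alpha\notin\Sigma\}$ and prove that $P$ is a parabolic subset of $\Delta$, that is, $P$ is closed and $P\cup(-P)=\Delta$. The union property is immediate from the definition; the real content is closedness of $P$, which controls how ``injective'' and ``nilpotent'' directions may combine. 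This is the step I expect to be the main obstacle: it requires analysing each rank-two subsystem spanned by two roots of $P$ and ruling out, via the dichotomy applied to the corresponding rank-two subalgebra acting on $M$, the configurations that would violate closedness. In the super setting one must additionally treat isotropic odd roots (where $X_\alpha^{2}=0$) and use odd reflections to pass between Dynkin diagrams, as recorded earlier in the excerpt.

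\emph{Step 3 (induction from the Levi).} Once $P$ is known to be a parabolic subset, it determines a $\mathbb{Z}$-grading $\mathfrak{g}=\bigoplus_{j}\mathfrak{g}(j)$ with $\mathfrak{p}=\bigoplus_{j\ge 0}\mathfrak{g}(j)=\mathfrak{l}\oplus\mathfrak{n}$, where $\mathfrak{n}=\bigoplus_{\alpha\in P\setminus(-P)}\mathfrak{g}_\alpha$ acts locally nilpotently and $\mathfrak{l}=\mathfrak{h}\oplus\bigoplus_{\alpha\in P\cap(-P)}\mathfrak{g}_\alpha$ is a Levi subalgebra. Because $\mathfrak{n}$ acts locally nilpotently and its roots lie in an open half space, while the weight spaces of $M$ are finite dimensional, the subspace $S:=\{v\in M:\ \mathfrak{n}\cdot v=0\}$ is nonzero (take a weight extremal in the $\mathfrak{n}$-direction). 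Since $[\mathfrak{l},\mathfrak{n}]\subset\mathfrak{n}$, $S$ is an $\mathfrak{l}$-module, and simplicity of $M$ forces $S$ to be simple and $M=U(\mathfrak{g})S$; the universal property of the induced module then gives $M\cong L_{\mathfrak{p}}(S)$. Finally, $S$ is cuspidal over $\mathfrak{l}$: by maximality of $\mathfrak{n}$ every root vector of the semisimple part of $\mathfrak{l}$ acts injectively on $M$, hence on $S$, so $S$ is not parabolically induced from any proper parabolic of $\mathfrak{l}$. This realizes $M$ as parabolically induced from the cuspidal $\mathfrak{l}$-module $S$, as claimed.
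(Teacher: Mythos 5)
The first thing to note is that the paper does not prove Theorem~\ref{cusp} at all; it is quoted from Fernando \cite{F90} and Dimitrov--Mathieu--Penkov \cite{DMP}, so your attempt can only be measured against those arguments, whose general skeleton (nilpotency/injectivity dichotomy, a closed parabolic subset of $\Delta$, extremal vectors, induction from the Levi) you have reproduced correctly. The genuine gap is in Step 2: your parabolic subset $P=\{\alpha\in\Delta:\ \alpha\in\Sigma\ \text{or}\ -\alpha\notin\Sigma\}$ places every root $\alpha$ with \emph{both} $\pm\alpha\in\Sigma$ into the Levi part $P\cap(-P)$, and these are exactly the roots your construction cannot afford to put there. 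Concretely: if $M$ is finite dimensional, then $\Sigma=\Delta$, so $P=\Delta$, $\mathfrak{n}=0$, $\mathfrak{l}=\mathfrak{g}$ and $S=M$; but a finite-dimensional simple module is parabolically induced from a Borel subalgebra and is therefore \emph{not} cuspidal, so the closing claim of Step 3 (``every root vector of the semisimple part of $\mathfrak{l}$ acts injectively on $M$'') is false as stated. The failure is even more severe in the super case relevant to this paper: for $D(2,1,\alpha)$ every odd root $\beta$ is isotropic, so $X_\beta^2=0$ and $X_\beta$ acts locally nilpotently no matter what $M$ is; hence all odd roots lie in $\Sigma$ in both directions, your recipe forces all of $\mathfrak{g}_{\bar{1}}$ into the ``Levi'', and closedness (since $[\mathfrak{g}_{\bar{1}},\mathfrak{g}_{\bar{1}}]=\mathfrak{g}_{\bar{0}}$, e.g. $\beta_1+\beta_2$ is a sum of two odd roots) then forces $\mathfrak{p}=\mathfrak{g}$ for \emph{every} simple weight module --- including non-cuspidal highest weight modules, for which the conclusion ``$S=M$ is cuspidal'' is false.

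What is missing is precisely the step that Fernando and DMP work hardest on: roots that are locally nilpotent in both directions (the ``integrable'' even directions, and all isotropic odd directions, for which the dichotomy carries no information) must be \emph{split} between $\mathfrak{n}$ and the opposite nilradical by an auxiliary polarization --- in \cite{F90} via a triangular decomposition of the subalgebra of locally finite elements, in \cite{DMP} via the geometry of $\mbox{Supp }M$ (its asymptotic cone). The Levi must consist of $\mathfrak{h}$ together with the root spaces of even roots acting injectively in \emph{both} directions, with odd root spaces distributed by the same grading functional; only then is $S=\{v\in M:\ \mathfrak{n}\cdot v=0\}$ cuspidal over $\mathfrak{l}$. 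Relatedly, the closedness of $P$, which you defer as ``the main obstacle'', is indeed the combinatorial heart of Fernando's proof, but since your $P$ is the wrong set, that step cannot be repaired without first changing the definition.
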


This theorem is an important step towards the classification of all simple finite weight modules.
It reduces the general classification problem to that of classifying cuspidal modules.

\begin{theorem}[Fernando \cite{F90}] If $\mathfrak{g}$ is a finite dimensional simple Lie algebra that admits a cuspidal module, then $\mathfrak{g}$ is of type A or C. \end{theorem}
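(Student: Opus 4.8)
The plan is to recast cuspidality as a statement about the action of root vectors, exhibit the modules in types $A$ and $C$ directly, and then eliminate every remaining type by a global multiplicity argument.

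First I would record the standard reformulation: a simple finite weight $\mathfrak{g}$-module $M$ is cuspidal if and only if it is \emph{torsion-free}, meaning every root vector $X_\alpha$ ($\alpha\in\Delta$) acts injectively on $M$. This is the Lie-algebra case of the parabolic-induction mechanism behind Theorem~\ref{cusp}: the set of roots whose root vectors act locally finitely on $M$ is always a closed (parabolic) subset of $\Delta$, and $M$ is induced from the associated parabolic, so for $M$ cuspidal this set must be empty. One also knows (Fernando) that such an $M$ is bounded and that its support is a single coset of the root lattice $Q$.

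For sufficiency, that types $A_n$ and $C_n$ do admit cuspidal modules, I would write down the classical realizations. For $C_n=\mathfrak{sp}_{2n}$ the quadratic elements of the Weyl algebra $\mathbb{C}\langle p_i,q_i\rangle$ span $\mathfrak{sp}_{2n}$, and twisting the oscillator module $\mathbb{C}[q_1,\dots,q_n]$ by generic exponents $q_i\mapsto q_i^{s_i}$ (after localizing at the $q_i$) produces a torsion-free weight module with bounded multiplicities. For $A_n=\mathfrak{sl}_{n+1}$ one lets $\mathfrak{gl}_{n+1}$ act on $\mathbb{C}[x_0^{\pm1},\dots,x_n^{\pm1}]$ via $E_{ij}\mapsto x_i\partial_{x_j}$, twists by generic exponents, restricts to $\mathfrak{sl}_{n+1}$, and passes to a simple subquotient of full support. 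In both cases every $X_\alpha$ is visibly injective, so the module is cuspidal.

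The necessity---that no simple Lie algebra of type $B$, $D$, $E$, $F$ or $G$ admits a cuspidal module---is the main obstacle. The rank-two case $G_2$ can be excluded by a direct computation on its long root strings, but for the higher forbidden types $B_n$, $D_n$, $F_4$, $E_6$, $E_7$, $E_8$ no such reduction is available: each contains only rank-two subsystems of type $A_1\times A_1$, $A_2$ or $B_2=C_2$, all of which \emph{do} admit torsion-free modules, so restriction to a rank-two subalgebra yields no contradiction. The argument must therefore be global. I would embed a hypothetical cuspidal module into a semisimple coherent family $\mathcal{M}$ and use that the multiplicity function $\mu\mapsto\dim\mathcal{M}_\mu$ agrees on each $Q$-coset with a polynomial on $\mathfrak{h}^*$ that is invariant under the $\rho$-shifted (dot) action of the Weyl group $W$ and is a nonnegative integer on the support. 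The places where this function drops are controlled by the Shapovalov determinant, whose zero locus is governed by the positive roots; matching a nonnegative, $W$-dot-invariant polynomial of the allowed bounded degree against these constraints is exactly what becomes impossible unless $\Delta$ is of type $A$ or $C$. Carrying out this consistency check type by type is where the real work lies, and it is the step I expect to be hardest, since it is precisely the point at which the distinguished role of $A_n$ and $C_n$ among all root systems must be extracted from the combinatorics of $W$ and the Shapovalov form.
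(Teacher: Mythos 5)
First, a point of reference: the paper does not prove this statement at all --- it is quoted from Fernando \cite{F90} as background --- so your proposal must stand on its own. Its first half is fine: the reformulation of cuspidal as torsion-free, and the explicit twisted/localized realizations (the oscillator module inside the Weyl algebra for $\mathfrak{sp}_{2n}$, the operators $x_i\partial_{x_j}$ on twisted Laurent polynomials for $\mathfrak{sl}_{n+1}$) are correct and standard. The genuine gap is the necessity direction, which is the entire content of the theorem: you explicitly defer it (``carrying out this consistency check type by type is where the real work lies''). What you offer in its place --- embedding into a semisimple coherent family and ``matching'' a Weyl-invariant multiplicity polynomial against the Shapovalov zero locus --- is a restatement of the difficulty, not an argument; it also misdescribes the machinery, since in a coherent family the multiplicity function $\mu\mapsto\dim\mathcal{M}_{\mu}$ is \emph{constant} by definition (it is the traces $\mu\mapsto\mbox{Tr }u|_{\mathcal{M}_{\mu}}$, $u\in C(\mathfrak{h})$, that are polynomial), so there is no multiplicity polynomial to match, and no reason is given why any such check should fail exactly outside types $A$ and $C$.

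Second, your structural claim that ``no reduction is available'' and hence ``the argument must be global'' is mistaken, and it steers you away from the arguments that actually close this gap. Rank-two restriction is indeed useless, but reduction to \emph{closed subsystems of higher rank} works: every irreducible root system not of type $A$ or $C$ contains a closed subsystem of type $B_3$, $D_4$ or $G_2$ (namely $B_n\supset B_3$, $D_n\supset D_4$, $E_{6},E_7,E_8\supset D_4$, $F_4\supset B_3$). Cuspidality transfers to the corresponding subalgebra $\mathfrak{l}=\mathfrak{h}\oplus\bigoplus_{\alpha\in\Delta'}\mathfrak{g}_{\alpha}$: if $M$ is cuspidal then its multiplicities are constant on its supporting $Q$-coset (the Corollary to Fernando's characterization theorem in Section~2.2 of the paper), so any cyclic $U(\mathfrak{l})$-submodule $U(\mathfrak{l})v$ is bounded with support in a single coset of the root lattice of $\Delta'$, hence has finite length over $\mathfrak{l}$, and a simple $\mathfrak{l}$-submodule inherits injectivity of all root vectors. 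One is then left with eliminating the three concrete algebras $B_3$, $D_4$, $G_2$ --- a finite task, and essentially how Fernando's proof proceeds. (Note also that your proposed $G_2$ computation ``on its long root strings'' cannot suffice: the long roots of $G_2$ form an $A_2$, which \emph{does} admit torsion-free modules.) Alternatively, a genuinely global argument exists, but it runs through Gelfand--Kirillov dimension rather than polynomial matching: a cuspidal module has $\mbox{GKdim}$ equal to $\mbox{rank}(\mathfrak{g})$ (constant multiplicities on a full lattice coset), while Joseph's inequality gives $\mbox{GKdim }M\geq\frac{1}{2}\dim\mathcal{O}_{min}$ for any infinite-dimensional simple module, and $\frac{1}{2}\dim\mathcal{O}_{min}\leq\mbox{rank}(\mathfrak{g})$ holds precisely in types $A$ and $C$ (e.g. $2n-2>n$ for $B_n$, $n\geq 3$; $2n-3>n$ for $D_n$, $n\geq 4$; $3>2$ for $G_2$; $8>4$ for $F_4$). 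Either route supplies the missing step; as written, your proposal does not contain it.
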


\begin{theorem}[Dimitrov, Mathieu, Penkov \cite{DMP}]
Only the following basic Lie superalgebras admit a cuspidal module: $\mathfrak{psl}(n|n)$, $\mathfrak{osp}(m|2n)$ with $m\leq 6$, $D(2,1,\alpha)$ with $\alpha\in\mathbb{C}\setminus\{0,-1\}$,  $\mathfrak{sl}(n)$, $\mathfrak{sp}(2n)$.
\end{theorem}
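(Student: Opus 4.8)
The plan is to combine the characterization of cuspidality with the known classification of cuspidal modules over reductive Lie algebras, and then to treat the odd roots separately. Recall from the discussion preceding Theorem~\ref{cusp} that cuspidal modules are characterized both by their support and by the action of $\mathfrak{g}$; the form I would use is that a simple finite weight module $M$ is cuspidal if and only if every even root vector $e_{\alpha}$ ($\alpha\in\Delta_{\bar{0}}$) acts injectively on $M$, equivalently $\mathrm{Supp}\,M$ is a single coset of $Q_{\bar{0}}$. Granting this, the first step is to restrict a cuspidal $M$ to $\mathfrak{g}_{\bar{0}}$. Since $\mathfrak{g}$ and $\mathfrak{g}_{\bar{0}}$ share the Cartan subalgebra $\mathfrak{h}$, the restriction is again a finite weight module, and by hypothesis every root vector of $\mathfrak{g}_{\bar{0}}$ acts injectively on it; restricting further to a simple ideal $\mathfrak{s}$ of $\mathfrak{g}_{\bar{0}}$ yields a (possibly reducible) torsion-free finite weight $\mathfrak{s}$-module.

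The second step invokes Fernando's theorem: its nonexistence half shows that a simple Lie algebra carrying any torsion-free finite weight module must be of type $A$ or $C$. Running through the even parts of the basic Lie superalgebras then forces strong constraints. The factor $\mathfrak{so}_7=B_3$ eliminates $F(4)$ and the factor $G_2$ eliminates $G(3)$; for $\mathfrak{osp}(m|2n)$ the ideal $\mathfrak{so}_m$ must be abelian or of type $A$ or $C$, and since $\mathfrak{so}_5\cong C_2$ and $\mathfrak{so}_6\cong A_3$ while $\mathfrak{so}_7=B_3$ is forbidden, this yields exactly $m\leq 6$. The superalgebras $\mathfrak{sl}(m|n)$, $\mathfrak{psl}(n|n)$ and $D(2,1,\alpha)$ have only type $A$ even factors and survive this test, as do the Lie algebras $\mathfrak{sl}(n),\mathfrak{sp}(2n)$.

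The third step uses the odd roots to cut the surviving list down to the one asserted. The governing mechanism is that in any $\mathbb{Z}$-grading with $\mathfrak{g}(2)=0$ one has $[\mathfrak{g}(1),\mathfrak{g}(1)]=0$, so every $x\in\mathfrak{g}(1)$ satisfies $x^{2}=\tfrac12[x,x]=0$: the degree-one part consists of anticommuting square-zero operators and can never act injectively. Concretely, if $\mathfrak{g}$ admits a short grading $\mathfrak{g}=\mathfrak{g}(-1)\oplus\mathfrak{g}(0)\oplus\mathfrak{g}(1)$ with $\mathfrak{g}(0)=\mathfrak{g}_{\bar{0}}$ and $\mathfrak{g}(\pm1)$ purely odd, coming from an element of $\mathfrak{h}$, then $\mathfrak{g}(1)$ generates a finite-dimensional exterior algebra of operators, so $M':=\{m\in M:\mathfrak{g}(1)m=0\}\neq 0$. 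As $M'$ is stable under the parabolic $\mathfrak{p}=\mathfrak{g}(0)\oplus\mathfrak{g}(1)$ and $M=U(\mathfrak{g}(-1))M'$, the module $M$ is parabolically induced from the proper $\mathfrak{p}$, hence not cuspidal. This excludes $\mathfrak{sl}(m|n)$ with $m\neq n$, whose grading element is the central element of $\mathfrak{g}_{\bar{0}}$ and therefore lies in $\mathfrak{h}$; by contrast, for $\mathfrak{psl}(n|n)$ the even part is semisimple and no such inner short grading exists, so it escapes, and the genuinely non-type-I algebras $\mathfrak{osp}(m|2n)$ and $D(2,1,\alpha)$ escape as well.

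Finally, to prove the list is exactly attained one must construct cuspidal modules for each surviving algebra; this existence half can be produced by twisted localization of suitable typical highest weight modules, precisely the kind of construction carried out for $D(2,1,\alpha)$ in the body of this paper. The step I expect to be the main obstacle is the odd-root bookkeeping in the necessity direction: the even-part restriction is clean, but correctly organizing the square-zero odd root vectors and the induced short gradings — in particular pinning down the exact boundary $m\leq 6$, handling the low-rank coincidences such as $\mathfrak{osp}(2|2n)$ and $\mathfrak{osp}(4|2)\cong D(2,1,1)$, and cleanly separating $\mathfrak{psl}(n|n)$ from the excluded $\mathfrak{sl}(m|n)$ — requires a careful case-by-case analysis of each basic Lie superalgebra together with its distinguished $\mathbb{Z}$-grading.
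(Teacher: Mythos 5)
A framing point first: the paper does not prove this statement at all --- it is quoted from \cite{DMP} --- so your attempt can only be compared with the argument given there; your three-step skeleton (Fernando's theorem on the even part, a short-grading/type-I exclusion for the odd part, explicit constructions for existence) is indeed the shape of that argument. But your execution has two genuine gaps. The first is in the even-part step: Fernando's theorem, both as quoted in this paper and as proved in \cite{F90}, concerns \emph{simple} cuspidal modules, whereas the restriction of a cuspidal $\mathfrak{g}$-module to $\mathfrak{g}_{\bar{0}}$ or to a simple ideal $\mathfrak{s}$ is essentially never simple. The extension you invoke --- that a simple Lie algebra carrying \emph{any} torsion-free finite weight module is of type $A$ or $C$ --- is not automatic: injectivity of root vectors does not pass to quotients, so one cannot just pass to a simple subquotient. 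The clean repair uses a result the paper quotes, Theorem~\ref{decompose}(ii): a cuspidal $M$ embeds in $\widetilde{N}$ for some cuspidal, hence simple, $\mathfrak{g}_{\bar{0}}$-module $N$, whose tensor factors are simple cuspidal $\mathfrak{s}$-modules to which Fernando's theorem applies directly. (A smaller inaccuracy: the support of a cuspidal module is a single $Q$-coset, not a single $Q_{\bar{0}}$-coset, cf.\ Theorem~\ref{characterize}; only the multiplicities are constant along $Q_{\bar{0}}$.)

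The second, more serious, gap is a factual error in your odd-root case analysis: $\mathfrak{osp}(2|2n)$ \emph{is} of type I. Its even part is $\mathfrak{so}(2)\oplus\mathfrak{sp}(2n)$, and the generator of the one-dimensional center $\mathfrak{so}(2)\subset\mathfrak{h}$ is exactly a grading element giving an inner short grading $\mathfrak{g}(-1)\oplus\mathfrak{g}_{\bar{0}}\oplus\mathfrak{g}(1)$ with $\mathfrak{g}(\pm 1)$ odd. So the very mechanism you use to kill $\mathfrak{sl}(m|n)$, $m\neq n$, also kills $\mathfrak{osp}(2|2n)$; equivalently, its odd roots $\pm\varepsilon\pm\delta_i$ involve a direction $\varepsilon$ absent from $Q_{\bar{0}}$, so $Q_{\bar{0}}$ has infinite index in $Q$ and no simple weight module can be dense, contradicting cuspidality via Theorem~\ref{equivalence}. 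Your claim that ``the genuinely non-type-I algebras $\mathfrak{osp}(m|2n)$ escape'' is therefore false at $m=2$; note also the low-rank isomorphism $\mathfrak{osp}(2|2)\cong\mathfrak{sl}(2|1)$, which your own argument excludes --- an inconsistency your write-up never confronts. Under the literal ``only $\ldots$ admit'' reading of the statement this does not make the theorem false (listing an algebra that happens to admit no cuspidal module is harmless for the necessity direction, which your steps one and two essentially do establish once repaired), but it breaks the program you describe: the promised sufficiency half --- cuspidal modules via twisted localization for every algebra on the list --- is impossible for $m=2$, and is not carried out for any of the others ($\mathfrak{psl}(n|n)$ and $\mathfrak{osp}(m|2n)$ with $3\leq m\leq 6$), where it constitutes the bulk of the work in \cite{DMP}.
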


The following theorem gives a characterization of cuspidal $\mathfrak{g}_{\bar{0}}$-modules.

\begin{theorem}[Fernando \cite{F90}] Let $\mathfrak{g}_{\bar{0}}$ be a reductive Lie algebra, and let $M$ be a simple finite weight  $\mathfrak{g}_{\bar{0}}$-module.  Then $M$ is cuspidal iff $\mbox{Supp }M$ is exactly one $Q$ coset iff $\mbox{ad }x_{\alpha}$ is injective for all $\alpha\in\Delta$, $x_{\alpha}\in\mathfrak{g}_{\alpha}$.
\end{theorem}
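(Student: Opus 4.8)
The plan is to derive the two biconditionals from the single cycle of implications
\[
\text{(all }x_\alpha\text{ injective on }M)\ \Rightarrow\ (\mathrm{Supp}\,M\text{ is one }Q\text{-coset})\ \Rightarrow\ (M\text{ cuspidal})\ \Rightarrow\ \text{(all }x_\alpha\text{ injective on }M),
\]
all resting on one structural lemma. Since $\mathfrak{g}_{\bar 0}$ is reductive its center lies in $\mathfrak{h}$ and acts by a scalar on the simple weight module $M$ (Schur), so neither the support cosets nor the injectivity conditions see the center, and I may assume $\mathfrak{g}_{\bar 0}$ semisimple. The engine is the following dichotomy: for each root $\alpha$ and $x_\alpha\in\mathfrak{g}_\alpha$, the subspace $M[x_\alpha]=\{m\in M:x_\alpha^k m=0\text{ for some }k\}$ is a submodule. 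This uses that $\mathrm{ad}\,x_\alpha$ is nilpotent on the finite-dimensional $\mathfrak{g}_{\bar 0}$, so that for $y\in\mathfrak{g}_{\bar 0}$ the identity $x_\alpha^N(ym)=\sum_j \binom{N}{j}(\mathrm{ad}(x_\alpha)^j y)(x_\alpha^{N-j}m)$ annihilates $ym$ once $N$ exceeds the sum of the two nilpotency bounds. By simplicity $M[x_\alpha]$ is $0$ or $M$, so \emph{each} $x_\alpha$ acts either injectively or locally nilpotently on $M$.

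The first implication is immediate: if every $x_\alpha$ is injective then $x_\alpha\colon M_\mu\hookrightarrow M_{\mu+\alpha}$ gives $\dim M_{\mu+\alpha}\geq\dim M_\mu$ whenever $\mu\in\mathrm{Supp}\,M$. Every element of $Q$ is a non-negative integer combination of elements of $\Delta$ (each $\pm\alpha$ is available), so chaining these inclusions from a fixed $\mu_0\in\mathrm{Supp}\,M$ one root at a time shows $\mu_0+Q\subseteq\mathrm{Supp}\,M$; as $\mathrm{Supp}\,M\subseteq\mu_0+Q$ always holds for a cyclic module, the support is exactly one coset. For the second implication I argue contrapositively. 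If $M=L_{\mathfrak{p}}(S)$ is induced from a proper parabolic $\mathfrak{p}=\mathfrak{l}\oplus\mathfrak{n}$ arising from a grading $\mathfrak{g}=\oplus_j\mathfrak{g}(j)=[\,t,\cdot\,]$-eigenspaces with $t\in\mathfrak{h}$, then $M$ is a quotient of $U(\mathfrak{n}^-)\otimes S$, so every weight $\nu$ of $M$ has $\nu(t)$ bounded above by $\max\{\sigma(t):\sigma\in\mathrm{Supp}\,S\}$. A full $Q$-coset is unbounded in the $t$-direction, so having support one coset forces cuspidality.

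The hard part will be the final implication, that cuspidality of $M$ forces injectivity of every $x_\alpha$; equivalently, that a non-injective direction forces parabolic induction. Suppose some $x_\alpha$ is not injective, hence locally nilpotent, and set $\Sigma=\{\beta\in\Delta:x_\beta\text{ acts locally nilpotently on }M\}\neq\emptyset$. The crux is to show that $\Sigma$ is a \emph{closed} subset of $\Delta$ (if $\beta,\gamma\in\Sigma$ and $\beta+\gamma\in\Delta$ then $\beta+\gamma\in\Sigma$), which I would obtain by applying the submodule dichotomy to $x_{\beta+\gamma}$, extracted from $[x_\beta,x_\gamma]$, acting on the joint locally nilpotent vectors. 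Combined with the dichotomy applied to $\pm\beta$, this separates $\Sigma$ into a Levi part $\Sigma\cap(-\Sigma)$ and a nilradical $\Delta(\mathfrak{n})=\Sigma\setminus(-\Sigma)$, yielding a $\mathbb{Z}$-grading with parabolic $\mathfrak{p}=\mathfrak{l}\oplus\mathfrak{n}$. Since $\mathfrak{n}$ is nilpotent and acts locally nilpotently, an Engel/extremal-weight argument shows $M^{\mathfrak{n}}=\{m:\mathfrak{n}\,m=0\}$ is nonzero; it is $\mathfrak{l}$-stable, simple over $\mathfrak{l}$ because $M$ is simple, and $M=U(\mathfrak{n}^-)\,M^{\mathfrak{n}}$, exhibiting $M=L_{\mathfrak{p}}(M^{\mathfrak{n}})$ with $\mathfrak{p}$ proper. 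I expect the closedness of $\Sigma$, and the verification that it defines a genuine parabolic subset of $\Delta$ rather than merely a closed one, to be the most delicate points, since one must rule out bad interactions between $\Sigma$ and $-\Sigma$; here the finiteness of the weight multiplicities and the $\mathfrak{sl}_2$-theory in each individual root direction are what control the configuration.
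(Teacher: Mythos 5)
The paper itself gives no proof of this statement (it is quoted from Fernando's paper), so your argument must stand on its own. The uncontroversial parts are fine: the injectivity/local-nilpotence dichotomy via the identity $x_\alpha^N y=\sum_j\binom{N}{j}(\mathrm{ad}\,x_\alpha)^j(y)\,x_\alpha^{N-j}$, the implication (all $x_\alpha$ injective) $\Rightarrow$ (support is a full $Q$-coset), and (support one coset) $\Rightarrow$ (cuspidal) via a grading element $t$ are all essentially complete. The genuine gaps are concentrated in the implication (cuspidal) $\Rightarrow$ (all $x_\alpha$ injective), and they are not merely technical. First, closedness of $\Sigma$ is asserted, not proved: the dichotomy applied to $x_{\beta+\gamma}$ only tells you it is injective \emph{or} locally nilpotent; you must positively exclude injectivity, and ``acting on the joint locally nilpotent vectors'' is not an argument (any attempt to show $U(\mathfrak{a})m$ is finite dimensional for the subalgebra $\mathfrak{a}$ generated by $x_\beta,x_\gamma$ runs into circularity, since the PBW spanning set involves the very root vectors in question). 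A real proof exists in the simplest case: if $\beta+2\gamma\notin\Delta$, take $0\neq v\in\ker x_\beta$ and $N\geq 1$ minimal with $x_\gamma^N v=0$; then $0=x_\beta x_\gamma^N v=N\,[x_\beta,x_\gamma]\,x_\gamma^{N-1}v$, so $x_{\beta+\gamma}$ kills the nonzero vector $x_\gamma^{N-1}v$. But for $B_2$- and $G_2$-type rank-two cones there are correction terms and one must induct carefully; this is the content of the Fernando--Kac subalgebra theorem, the genuinely hard ingredient, and your outline omits it.

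Second, and more decisively, your construction of the parabolic is wrong whenever $\Sigma$ is symmetric: if $\Sigma=-\Sigma$ then $\Delta(\mathfrak{n})=\Sigma\setminus(-\Sigma)=\emptyset$ and $\mathfrak{p}=\mathfrak{g}$, which is not proper, so the argument proves nothing. This is not a corner case: it occurs for every finite-dimensional simple module ($\Sigma=\Delta$) and for modules such as (finite-dimensional)$\,\otimes\,$(cuspidal) over $\mathfrak{g}_{\bar 0}=\mathfrak{sl}_2\times\mathfrak{sl}_2\times\mathfrak{sl}_2$ --- exactly the modules relevant in this paper --- where $\Sigma=\{\pm\alpha_1\}$. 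These modules are not cuspidal, but exhibiting them as parabolically induced requires putting a \emph{polarization} of the symmetric part $\Sigma\cap(-\Sigma)$ into the nilradical (chosen by a suitable generic functional); the nilradical is not determined by $\Sigma$ alone. Third, even granting closedness of $\Sigma$, that alone does not yield any compatible proper parabolic: in $\mathfrak{sl}_3$ the configuration $\Sigma=\{\pm\alpha_1\}$, $I=\{\pm\alpha_2,\pm(\alpha_1+\alpha_2)\}$ has $\Sigma$ closed, yet no proper parabolic has its nilradical inside $\Sigma$, because the complementary set $\{\pm\alpha_2,\pm(\alpha_1+\alpha_2)\}$ is not closed (it omits $(\alpha_1+\alpha_2)-\alpha_2=\alpha_1$) and so cannot be a Levi. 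Since the nilradical always acts locally nilpotently on a parabolically induced simple module, ruling out such configurations is forced, and it requires proving that $I$ (the injective roots) is closed as well, or an equivalent convexity statement about $\mathrm{Supp}\,M$ --- an ingredient your proposal never identifies. Until these three points are supplied, the cycle of implications does not close.
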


\begin{corollary} Let $\mathfrak{g}_{\bar{0}}$ be a reductive Lie algebra. If $M$ is a cuspidal  $\mathfrak{g}_{\bar{0}}$-module, then $M$ is bounded and $\mbox{dim }M_{\mu}=\mbox{deg }M$ for all $\mu\in\mbox{Supp }M$.\end{corollary}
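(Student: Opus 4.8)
The plan is to deduce everything from the characterization of cuspidal modules in the preceding theorem, namely that the action of every root vector $x_{\alpha}$ ($\alpha\in\Delta$) on $M$ is injective and that $\mbox{Supp }M$ is a single $Q$-coset. First I would record the basic consequence of injectivity: fix $\alpha\in\Delta$ and a nonzero $x_{\alpha}\in\mathfrak{g}_{\alpha}$. For each $\mu\in\mathfrak{h}^{*}$ the map $x_{\alpha}\colon M_{\mu}\to M_{\mu+\alpha}$ is injective, so $\dim M_{\mu}\le\dim M_{\mu+\alpha}$; applying the same to the root $-\alpha$ (which lies in $\Delta$) and a vector $x_{-\alpha}\in\mathfrak{g}_{-\alpha}$ gives $\dim M_{\mu+\alpha}\le\dim M_{\mu}$. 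Since $M$ is a finite weight module, all these weight spaces are finite dimensional, so the two inequalities force
\[
\dim M_{\mu}=\dim M_{\mu+\alpha}\qquad\text{for every }\mu\in\mathfrak{h}^{*}\text{ and every }\alpha\in\Delta.
\]

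Next I would upgrade this local invariance to global constancy on the support. Because $x_{\alpha}$ is injective, $\mu\in\mbox{Supp }M$ implies $\mu+\alpha\in\mbox{Supp }M$ for every $\alpha\in\Delta$; iterating, $\mbox{Supp }M$ is stable under translation by the subgroup generated by $\Delta$, which is exactly $Q$. Hence $\mbox{Supp }M$ is the full coset $\mu_{0}+Q$ for any fixed $\mu_{0}\in\mbox{Supp }M$ (this matches the ``single $Q$-coset'' condition). Now given $\mu,\mu'\in\mbox{Supp }M$ I would write $\mu'-\mu=\sum_{i}n_{i}\alpha_{i}$ with $\alpha_{i}\in\Delta$, $n_{i}\in\mathbb{Z}$, and build a chain $\mu=\nu_{0},\nu_{1},\dots,\nu_{k}=\mu'$ in which each step adds or subtracts a single root. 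Every $\nu_{j}$ lies in $\mu_{0}+Q=\mbox{Supp }M$, and the displayed equality gives $\dim M_{\nu_{j}}=\dim M_{\nu_{j+1}}$ at each step, so $\dim M_{\mu}=\dim M_{\mu'}$.

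It then follows at once that the weight spaces over the support share a common finite dimension $d$; in particular $M$ is bounded by $d$, and since the weight spaces off the support are zero we get $\mbox{deg }M=\max_{\mu}\dim M_{\mu}=d=\dim M_{\mu}$ for all $\mu\in\mbox{Supp }M$, as claimed. There is no serious obstacle here: the argument is a two-sided injectivity (pigeonhole) computation combined with the path-connectivity of a $Q$-coset. The only points that require a word of care are that finite-dimensionality of the weight spaces, needed to convert mutual injectivity into equality of dimensions, comes from the hypothesis that $M$ is a finite weight module, and that $-\alpha\in\Delta$ whenever $\alpha\in\Delta$, so that the reverse map $x_{-\alpha}$ is available.
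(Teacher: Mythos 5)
Your argument is correct and is exactly the deduction the paper intends: the corollary is stated without proof as an immediate consequence of Fernando's theorem, and your two-sided injectivity (using $x_{\alpha}$ and $x_{-\alpha}$ on finite-dimensional weight spaces) plus the single-$Q$-coset description of the support is the standard way to fill it in. No gaps; the points you flag (finite-dimensionality of weight spaces, symmetry $-\alpha\in\Delta$) are precisely the ones that need care.
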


These conditions are too strict when $\mathfrak{g}$ is a basic Lie superalgebra, and so the following definitions were introduced in \cite{DMP}.
A finite weight module $M$ is called {\em torsion-free} if the monoid generated by $$\mbox{inj }M=\{\alpha\in\Delta_{\bar{0}}\mid  x_{\alpha}\in\mathfrak{g}_{\alpha} \text{ acts injectively on }M\}$$ is a subgroup of finite index in $Q$.
A finite weight module $M$ is called {\em dense} if $\mbox{Supp }M$ is a finite union of $Q'$-cosets, for some subgroup $Q'$ of finite index in $Q$.

\begin{theorem}[Dimitrov, Mathieu, Penkov  \cite{DMP}]\label{equivalence}
Let $\mathfrak{g}$ be a basic Lie superalgebra, and let $M$ be a simple finite weight $\mathfrak{g}$-module.  Then $M$ is cuspidal iff  $M$ is dense iff $M$ is torsion free.
\end{theorem}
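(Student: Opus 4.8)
The plan is to prove the three‑way equivalence by closing the cycle $\text{torsion free}\Rightarrow\text{dense}\Rightarrow\text{cuspidal}\Rightarrow\text{torsion free}$. The engine driving everything is the dichotomy that, on a \emph{simple} weight module $M$, each even root vector $x_\alpha$ ($\alpha\in\Delta_{\bar 0}$) acts either injectively or locally nilpotently. I would prove this first by showing that the set $M[\alpha]$ of vectors on which $x_\alpha$ acts locally nilpotently is a $\mathfrak{g}$-submodule --- a commutator computation using the $\mathfrak{sl}_2$-triple attached to $\alpha$ together with the fact that the nilpotency degree changes boundedly under the $\mathfrak{g}$-action --- so that simplicity forces $M[\alpha]=0$, i.e.\ $x_\alpha$ injective, or $M[\alpha]=M$, i.e.\ $x_\alpha$ locally nilpotent. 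Then $\mbox{inj }M$ is exactly the complement in $\Delta_{\bar 0}$ of the set of locally nilpotent even directions, and it governs both the shape of $\mbox{Supp }M$ and the available parabolic inductions.

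For $\text{torsion free}\Rightarrow\text{dense}$: if $\mu\in\mbox{Supp }M$ and $\alpha\in\mbox{inj }M$, then $x_\alpha M_\mu\neq 0$ gives $\mu+\alpha\in\mbox{Supp }M$, so $\mbox{Supp }M$ is stable under translation by the monoid generated by $\mbox{inj }M$. By hypothesis this monoid is a subgroup $Q'\subseteq Q$ of finite index, whence $\mbox{Supp }M$ is a union of $Q'$-cosets. Since $M$ is simple its support lies in a single $Q$-coset, which contains only $[Q:Q']<\infty$ cosets of $Q'$; hence $\mbox{Supp }M$ is a finite union of $Q'$-cosets and $M$ is dense. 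For $\text{dense}\Rightarrow\text{cuspidal}$ I argue the contrapositive: if $M$ is parabolically induced from a proper parabolic $\mathfrak{p}=\mathfrak{l}\oplus\mathfrak{n}$ coming from a $\mathbb{Z}$-grading with grading element $H$, then $M$ is a quotient of $U(\mathfrak{n}^-)\otimes S$ with $S$ supported in a single grade, so the functional $\xi(\cdot)=(\cdot)(H)$ is bounded above on $\mbox{Supp }M$. As $\mathfrak{p}$ is proper, $\xi$ is nonzero on $Q$; but any finite union of cosets of a finite-index subgroup is unbounded above in every nonzero rational direction, so $M$ cannot be dense.

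The hard direction is $\text{cuspidal}\Rightarrow\text{torsion free}$, again via the contrapositive: if $M$ is not torsion free I must exhibit a proper parabolic from which $M$ is induced. When the monoid generated by $\mbox{inj }M$ is not a finite-index subgroup, the dichotomy lets me separate $\mbox{inj }M$ from the locally nilpotent directions by a hyperplane, producing a grading element $H$ with $\alpha(H)\geq 0$ for every even root along which $M$ is locally nilpotent and $\alpha(H)>0$ for at least one such root; this $H$ defines a proper $\mathbb{Z}$-grading $\mathfrak{g}=\bigoplus_j\mathfrak{g}(j)$ and a parabolic $\mathfrak{p}=\bigoplus_{j\geq 0}\mathfrak{g}(j)=\mathfrak{l}\oplus\mathfrak{n}$. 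I then have to show that $\mathfrak{n}$ acts locally nilpotently on $M$ and that the top-grade constituent is a cuspidal $\mathfrak{l}$-module $S$ with $M\cong L_\mathfrak{p}(S)$, contradicting cuspidality.

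I expect this last step to be the main obstacle, and the genuinely super-algebraic difficulties live here. The grading $H$ is pinned down by the \emph{even} roots, so I must separately control the odd part of the nilradical: an odd root vector $x_\beta$ is not nilpotent a priori, and its square $\tfrac12[x_\beta,x_\beta]$ reintroduces an even root vector, so local nilpotency cannot be read off directly. The saving features are that $\mathfrak{g}_{\bar 1}$ is finite dimensional, so $U(\mathfrak{n}_{\bar 1})$ is finite dimensional, and that $[\mathfrak{n}_{\bar 1},\mathfrak{n}_{\bar 1}]\subseteq\mathfrak{n}_{\bar 0}$, which together let me deduce local nilpotency of the odd nilradical from that of the even part and then assemble the full $\mathfrak{p}$-induction. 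Closing the cycle then yields the three-way equivalence.
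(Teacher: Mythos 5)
The paper does not actually prove this theorem: it is quoted verbatim from Dimitrov--Mathieu--Penkov \cite{DMP} as background, so there is no internal proof to compare you against; I am reviewing your argument against the standard one from that literature. Your cyclic strategy is the right one, and the two ``easy'' legs are essentially complete. Torsion free $\Rightarrow$ dense correctly combines stability of $\mbox{Supp }M$ under translation by $\mbox{inj }M$ with the fact that the support of a simple weight module lies in a single $Q$-coset. Dense $\Rightarrow$ cuspidal correctly bounds the functional $\mu\mapsto\mu(H)$ above on the support of a parabolically induced module; the one point you gloss over is that the simple $\mathfrak{p}$-module $S$ is concentrated in a single grade, which does need an argument under the paper's definition (it follows because $\mathfrak{n}\cdot S$ is a $\mathfrak{p}$-submodule of $S$ missing the lowest $H$-level of $S$, hence zero).

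The genuine gap is in cuspidal $\Rightarrow$ torsion free, and it is not only that you left steps ``to be shown''. Your separating functional is oriented the wrong way: you require $\alpha(H)\geq 0$ for \emph{every} locally nilpotent even root, with strict inequality for at least one such root. This is unachievable whenever the locally nilpotent set contains a pair $\pm\alpha$ --- for instance for any finite-dimensional $M$, where $\mbox{inj }M=\emptyset$ and every even root vector acts nilpotently; these are precisely modules your contrapositive must handle, since their monoid $\{0\}$ has infinite index. What is needed is the opposite containment: $\alpha(H)\leq 0$ for all $\alpha\in\mbox{inj }M$, together with $\alpha(H)>0$ for at least one root $\alpha\in\Delta$; then every even root in the nilradical lies outside $\mbox{inj }M$ and is locally nilpotent by the dichotomy, with the odd ones handled by your squaring remark. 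Moreover, the existence of such an $H$ is not free and you do not prove it: one must first observe that if the convex cone generated by $\mbox{inj }M$ were all of $Q\otimes\mathbb{R}$, the monoid would automatically be a finite-index subgroup (for $\gamma$ in the monoid, $-m\gamma$ is a nonnegative integral combination of elements of $\mbox{inj }M$ for some $m\geq 1$, so $-\gamma=-m\gamma+(m-1)\gamma$ lies in the monoid), so failure of torsion-freeness yields a proper closed polyhedral cone, and only then does a supporting hyperplane produce $H$. Finally, the heart of the implication --- that $\mathfrak{n}$ acts locally finitely, that the top $H$-level of $M$ is a \emph{simple} $\mathfrak{l}$-module $S$ (a proper $\mathfrak{l}$-submodule of the top level would generate a proper $\mathfrak{g}$-submodule, contradicting simplicity of $M$), and that consequently $M\cong L_{\mathfrak{p}}(S)$ --- is announced as an obstacle rather than carried out. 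Your ingredients for the super part ($x_\beta^2\in\mathfrak{g}_{2\beta}$, finite-dimensionality of $U(\mathfrak{n}_{\bar 1})$) are the correct ones, but as written the hard direction is a plan, not a proof.
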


The following lemmas are proven in \cite{M00}.

\begin{lemma} Let $\mathfrak{g}_{\bar{0}}$ be a reductive Lie algebra. Any bounded $\mathfrak{g}_{\bar{0}}$-module has finite length. \end{lemma}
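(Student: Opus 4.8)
\emph{Proof proposal.} The plan is to reduce to the case where $\operatorname{Supp} M$ lies in a single $Q_{\bar{0}}$-coset and then bound the number of composition factors by the degree of $M$. Since the root vectors permute weight spaces along root directions, $M$ decomposes as the direct sum $M=\bigoplus_{C} M^{C}$ of its coset components $M^{C}=\bigoplus_{\mu\in C}M_{\mu}$, indexed by the cosets $C$ of $Q_{\bar{0}}$ in $\mathfrak{h}^{*}$, and each $M^{C}$ is a submodule; only finitely many of these are nonzero in the situations where the lemma is applied (e.g.\ for finitely generated $M$), so it suffices to treat a single coset $C$. The basic bookkeeping device is that, for any fixed $\mu$, the assignment $N\mapsto \dim N_{\mu}$ is additive on short exact sequences of weight submodules of $M$ and takes values in $\{0,1,\dots,c\}$, where $c=\deg M$. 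Hence for any chain $0=M_{0}\subset M_{1}\subset\cdots\subset M_{n}=M$ one has $\sum_{i}\dim (M_{i}/M_{i-1})_{\mu}=\dim M_{\mu}\le c$, so the number of steps $i$ with $\mu\in\operatorname{Supp}(M_{i}/M_{i-1})$ is at most $c$.

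First I would make this estimate uniform by producing a single weight $\mu_{0}\in C$ lying in the support of \emph{every} composition factor occurring in $C$. The difficulty is that a factor may be, for instance, a highest weight module whose support is a proper subcone of $C$, so no fixed weight is automatically common to all factors. To circumvent this I would invoke Mathieu's localization technique: choosing a root vector $x_{\alpha}$ (or a commuting family adapted to $\mathfrak{g}_{\bar{0}}$) generating an Ore subset of $U(\mathfrak{g}_{\bar{0}})$, one passes to the localized module $\widetilde{M}=M[x_{\alpha}^{-1}]$. On $\widetilde{M}$ the inverted root vectors act bijectively, so the support of each nonzero localized subquotient becomes a full coset, and every infinite-dimensional factor then contains a chosen deep weight $\mu_{0}$ in its support. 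Applying the additivity estimate to $\widetilde{M}$ at $\mu_{0}$ bounds the length of $\widetilde{M}$ by $c$.

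Finally I would descend from $\widetilde{M}$ back to $M$. The localization map $M\to\widetilde{M}$ has kernel and cokernel supported on the locus where the chosen root vectors fail to act injectively; this ``torsion'' part has strictly smaller support, missing an entire $\alpha$-string direction, so its composition factors are controlled by Noetherian induction on the dimension of the support together with the rank of $\mathfrak{g}_{\bar{0}}$, the base case being finite-dimensional support, where boundedness forces finite total dimension and hence finite length outright. Combining the length bound for $\widetilde{M}$ with the inductive bound on the torsion gives finite length for $M^{C}$, and summing over the finitely many relevant cosets yields the claim. \textbf{The main obstacle} is the control of this torsion part: one must check that inverting root vectors genuinely lowers the ``size'' of the support in a way that supports a clean induction, and that the resulting estimates assemble into a bound on the total length rather than merely a bound on each weight multiplicity.
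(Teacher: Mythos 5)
You should first be aware that the paper does not prove this lemma at all: it is quoted verbatim from Mathieu \cite{M00} (his finite-length lemma), where a standing convention is that weight modules are \emph{finitely generated}. That hypothesis is essential, not cosmetic: without it the statement is false --- for $\mathfrak{g}_{\bar{0}}=\mathfrak{sl}_2$ the module $\bigoplus_{n\geq 1}V(a)^{[\mu_n]}$, with the $\mu_n$ in pairwise distinct cosets, is bounded of degree $1$ and has infinite length. Granting finite generation (which you implicitly do), your first two steps are correct and standard: the decomposition $M=\bigoplus_C M^C$ into coset components with only finitely many nonzero summands, and the additivity bound showing that in any filtration at most $c=\deg M$ subquotients can contain a fixed weight $\mu$ in their support.

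The genuine gap is the localization-and-descent step, which is where the entire content of the lemma lives. A repairable point first: inverting a single $x_\alpha$ only makes supports of subquotients stable under $\mathbb{Z}\alpha$, not full cosets; you need a commuting family $\Gamma$ whose $\mathbb{Z}$-span has finite index in $Q_{\bar{0}}$, and even then the common-weight argument only bounds the length of $\widetilde{M}$ \emph{as a $U_\Gamma$-module}, since only $U_\Gamma$-stable subquotients are forced to have $\Gamma$-stable supports. The fatal point is that this does not bound the $U(\mathfrak{g}_{\bar{0}})$-length of $M$: a composition chain of $M$ localizes (localization is exact), but exactly those factors on which some $f_\gamma$, $\gamma\in\Gamma$, acts locally nilpotently are killed. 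Concretely, for $\mathfrak{sl}_2$ take $M=M(\lambda)$ with $\lambda$ dominant integral and $\Gamma=\{-\alpha\}$: here $f$ acts injectively, so your kernel is zero, $\widetilde{M}$ is $U_\Gamma$-simple (degree $1$ plus full-coset support forces this), yet $M$ has length $2$ --- the finite-dimensional factor $L(\lambda)$ simply vanishes under localization. Your proposed remedy, that the ``torsion'' has strictly smaller support so that one can induct on the dimension of the support, does not hold in any usable sense: the torsion part can be all of $M$ (e.g.\ when every $f_\gamma$ acts locally nilpotently, as on any lowest weight module for the chosen directions), and its support is then a full-dimensional translated cone, so your inductive invariant never decreases and the base case of finite-dimensional support is never reached; moreover the cokernel of $M/T\hookrightarrow\widetilde{M}$ has \emph{larger} support than $M$, not smaller. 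Re-choosing $\Gamma$ factor-by-factor does not help either, because distinct factors of one filtration can be highest weight with respect to different Borel subalgebras, with apexes arbitrarily deep in the coset, so no single weight and no single localization controls all of them simultaneously. Bounding the number of precisely these ``cone-supported'' factors is what the lemma asserts, and your proposal defers that point --- as your own closing sentence concedes --- rather than resolving it; in \cite{M00} it is handled with structure theory of simple weight modules (Fernando's theorem and finiteness of the accompanying parabolic and cuspidal data), not by localization alone.
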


\begin{lemma}
Let $\mathfrak{g}$ be a basic Lie superalgebra.  If $M$ is a simple finite weight  $\mathfrak{g}$-module, then for each $\alpha\in\Delta_{\bar{0}}$ the action of $x\in\mathfrak{g}_{\alpha}$ on $M$ is either injective or locally nilpotent.
\end{lemma}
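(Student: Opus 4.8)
The plan is to exhibit, for a fixed even root, the explicit $\mathfrak{g}$-submodule of $M$ on which the root vector acts locally nilpotently, and then to invoke simplicity. Fix $\alpha\in\Delta_{\bar 0}$ and a nonzero $x\in\mathfrak{g}_\alpha$, and note that $x$ is an even element. Define
$$N=\{m\in M \mid x^{k}m=0 \text{ for some } k\geq 0\}.$$
First I would check that $N$ is a subspace: if $x^{k_1}m_1=0$ and $x^{k_2}m_2=0$, then $x^{\max(k_1,k_2)}(m_1+m_2)=0$. Since $x$ is even, $x^{k}$ preserves the $\mathbb{Z}/2\mathbb{Z}$-grading, so $N$ is a graded subspace. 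The whole lemma then reduces to showing that $N$ is a $\mathfrak{g}$-submodule: once this is established, simplicity of $M$ forces $N=0$ or $N=M$. In the first case $xm=0$ implies $m\in N=0$, so $x$ acts injectively; in the second case every vector is annihilated by a power of $x$, so $x$ acts locally nilpotently.

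The heart of the argument rests on two observations. The first is that $\mathrm{ad}\,x$ is a nilpotent operator on the finite-dimensional space $\mathfrak{g}$: it sends $\mathfrak{g}_\beta$ into $\mathfrak{g}_{\beta+\alpha}$ (and $\mathfrak{h}$ into $\mathfrak{g}_\alpha$), hence shifts the root grading by $\alpha\neq 0$, and since $\Delta$ is finite every such chain terminates, giving $(\mathrm{ad}\,x)^{N}=0$ for some $N$. The second is the standard expansion in the associative algebra $U(\mathfrak{g})$: because $x$ is even, $\mathrm{ad}\,x=[x,-]$ is an ordinary derivation of $U(\mathfrak{g})$ with no sign corrections, whence for every $y\in\mathfrak{g}$ and $n\geq 0$,
$$x^{n}y=\sum_{j=0}^{n}\binom{n}{j}\bigl((\mathrm{ad}\,x)^{j}y\bigr)\,x^{\,n-j}.$$

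With these facts in hand the submodule property is immediate. Given $m\in N$ with $x^{k}m=0$ and any $y\in\mathfrak{g}$, apply the expansion with $n=k+N$: for $j\geq N$ the term vanishes since $(\mathrm{ad}\,x)^{j}y=0$, while for $j\leq N-1$ we have $n-j\geq k+1$, so $x^{\,n-j}m=0$. Hence $x^{\,k+N}(ym)=0$ and $ym\in N$, so $N$ is stable under $\mathfrak{g}$ and therefore under $U(\mathfrak{g})$; the dichotomy follows as above. I do not anticipate a genuine obstacle here. The only point demanding care is verifying that no super sign enters the binomial expansion, which is precisely where the hypothesis $\alpha\in\Delta_{\bar 0}$, forcing $x$ to be even, is essential; the gradedness of $N$ noted above also ensures the conclusion survives if ``simple'' is read in the $\mathbb{Z}/2\mathbb{Z}$-graded sense. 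It is worth remarking that finiteness of the weight spaces is not actually used in this dichotomy—only simplicity of $M$ and finite-dimensionality of $\mathfrak{g}$ enter.
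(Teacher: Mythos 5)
Your proof is correct. The paper does not prove this lemma itself---it quotes it as one of the lemmas established in Mathieu's paper \cite{M00}---and your argument (the set $N$ of vectors on which $x$ acts nilpotently is a graded submodule, via nilpotency of $\mathrm{ad}\,x$ on the finite-dimensional $\mathfrak{g}$ together with the binomial commutation formula for the even element $x$, so simplicity forces $N=0$ or $N=M$) is precisely the standard argument from that reference, carried over to the super setting with the two checks that matter: evenness of $x$ removes sign corrections from the expansion, and $N$ is graded so the dichotomy holds for simplicity in the $\mathbb{Z}/2\mathbb{Z}$-graded sense.
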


For each $\mathfrak{g}_{\bar{0}}$-module $N$, let
$$\widetilde{N} := \mbox{Ind}_{\mathfrak{g}_{\bar{0}}}^{\mathfrak{g}}(N).$$

\begin{theorem}[Dimitrov, Mathieu, Penkov  \cite{DMP}]\label{decompose} (i) For any finite cuspidal $\mathfrak{g}_{\bar{0}}$-module $N$, the module
$\widetilde{N}$ contains at least one and only finitely many non-isomorphic cuspidal submodules.\\
(ii) For any finite cuspidal $\mathfrak{g}$-module $M$, there is at least one and only finitely
many non-isomorphic cuspidal $\mathfrak{g}_{\bar{0}}$-modules such that $M\subset \widetilde{N}$.
\end{theorem}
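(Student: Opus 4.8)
The plan is to study $\widetilde{N}$ and the restriction functor through the decomposition of $U(\mathfrak{g})$ given by the PBW theorem. Ordering an odd basis first gives $U(\mathfrak{g})\cong \Lambda(\mathfrak{g}_{\bar{1}})\otimes U(\mathfrak{g}_{\bar{0}})$ as vector spaces, and hence $\widetilde{N}\cong \Lambda(\mathfrak{g}_{\bar{1}})\otimes N$ as $\mathfrak{g}_{\bar{0}}$-modules, where $\mathfrak{g}_{\bar{0}}$ acts diagonally by the adjoint action on the finite-dimensional exterior algebra $\Lambda(\mathfrak{g}_{\bar{1}})$ tensored with its action on $N$. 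Since a cuspidal module is bounded (by the corollary above), $N$ is bounded, and as $\Lambda(\mathfrak{g}_{\bar{1}})$ is finite-dimensional ($\mbox{dim}=2^{\dim\mathfrak{g}_{\bar{1}}}$), the module $\widetilde{N}$ is a bounded $\mathfrak{g}_{\bar{0}}$-module. By the lemma above it therefore has finite length over $\mathfrak{g}_{\bar{0}}$, a fortiori over $\mathfrak{g}$; in particular its socle is nonzero and it admits only finitely many non-isomorphic simple subquotients. This already yields the counting halves of both statements.

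For part (i) it remains to show that every simple submodule of $\widetilde{N}$ is cuspidal. I would first prove that each even root vector $x_{\alpha}$ ($\alpha\in\Delta_{\bar{0}}$) acts injectively on all of $\widetilde{N}$. Under the isomorphism above $x_{\alpha}$ acts as $\mbox{ad}(x_{\alpha})\otimes 1+1\otimes x_{\alpha}$, where $\mbox{ad}(x_{\alpha})$ is nilpotent on $\Lambda(\mathfrak{g}_{\bar{1}})$ and $x_{\alpha}$ acts injectively on $N$ (Fernando's characterization, since $N$ is cuspidal). Choosing an $\mbox{ad}(x_{\alpha})$-stable flag $0=V_{0}\subset\cdots\subset V_{k}=\Lambda(\mathfrak{g}_{\bar{1}})$ with $\mbox{ad}(x_{\alpha})V_{i}\subset V_{i-1}$, the operator induced on each graded piece $(V_{i}/V_{i-1})\otimes N$ is $1\otimes x_{\alpha}$, which is injective; injectivity on the associated graded forces injectivity on $\widetilde{N}$. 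Restricting to a simple submodule $M\subset\widetilde{N}$, all even root vectors act injectively, so $\mbox{inj }M=\Delta_{\bar{0}}$ generates the group $Q_{\bar{0}}$, which has finite index in $Q$ (index $2$ for $D(2,1,\alpha)$). Thus $M$ is torsion-free, hence cuspidal by Theorem~\ref{equivalence}, completing (i).

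For part (ii), let $M$ be a cuspidal $\mathfrak{g}$-module. Since $M$ is torsion-free, both $x_{\alpha}$ and $x_{-\alpha}$ act injectively for every $\alpha\in\Delta_{\bar{0}}$; as these are maps between finite-dimensional weight spaces, comparing dimensions shows each is bijective, so $\dim M_{\mu}$ is constant along every $Q_{\bar{0}}$-coset, and the same holds on each $\mathfrak{g}_{\bar{0}}$-submodule of $M$. Being simple, $M$ has support in a single $Q$-coset, which is a finite union of $Q_{\bar{0}}$-cosets as $[Q:Q_{\bar{0}}]<\infty$; hence $\mbox{Res }M$ is bounded and of finite length over $\mathfrak{g}_{\bar{0}}$. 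A diagram chase along a composition series, using bijectivity of $x_{\pm\alpha}$ on submodules, then shows that every composition factor of $\mbox{Res }M$ again has all even root vectors acting injectively, hence is cuspidal. Finally I would relate submodules $M\subset\widetilde{N}$ to the restriction via Frobenius reciprocity: since induction and coinduction for the finite-dimensional extension $\mathfrak{g}_{\bar{0}}\subset\mathfrak{g}$ differ only by the one-dimensional twist $\Lambda^{\mathrm{top}}\mathfrak{g}_{\bar{1}}$, which is trivial because $\sum_{\alpha\in\Delta_{\bar{1}}}\alpha=0$, one obtains $\mbox{Hom}_{\mathfrak{g}}(M,\widetilde{N})\cong\mbox{Hom}_{\mathfrak{g}_{\bar{0}}}(\mbox{Res }M,N)$. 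With $M$ and $N$ simple, $M\hookrightarrow\widetilde{N}$ exactly when $N$ is a simple quotient of $\mbox{Res }M$; these are cuspidal by the previous step, there is at least one, and only finitely many up to isomorphism, proving (ii).

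The main obstacle is controlling injectivity of even root vectors on subquotients: lifting it from $N$ to $\widetilde{N}$ in (i), and pushing it from $M$ down to the composition factors of $\mbox{Res }M$ in (ii). The first is handled by the associated-graded argument, the second by the constancy of weight multiplicities along $Q_{\bar{0}}$-cosets forced by bijectivity of $x_{\pm\alpha}$. A secondary technical point is the exact form of Frobenius reciprocity for the super-extension $\mathfrak{g}_{\bar{0}}\subset\mathfrak{g}$, in particular verifying that the top-form twist is trivial, which is what lets one pass cleanly between the ``submodule'' and ``quotient'' descriptions.
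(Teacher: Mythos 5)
The paper itself gives no proof of this theorem --- it is imported wholesale from \cite{DMP} --- so there is no in-paper argument to compare yours against; I am judging the proposal on its own merits. Its architecture is sound and the main tools are deployed correctly: the $\mathfrak{g}_{\bar{0}}$-module identification $\widetilde{N}\cong\Lambda(\mathfrak{g}_{\bar{1}})\otimes N$ with the diagonal action; boundedness of $\widetilde{N}$ plus the finite-length lemma, which yields both finiteness assertions; the flag (associated-graded) argument lifting injectivity of even root vectors from $N$ to $\widetilde{N}$ in part (i); and, in part (ii), the identification $\mathrm{Hom}_{\mathfrak{g}}(M,\widetilde{N})\cong \mathrm{Hom}_{\mathfrak{g}_{\bar{0}}}(\mathrm{Res}\,M,N)$ via coinduction and the Bell--Farnsteiner twist $\Lambda^{\mathrm{top}}\mathfrak{g}_{\bar{1}}$, which is indeed trivial as a $\mathfrak{g}_{\bar{0}}$-module.

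There is, however, one genuine gap, at the start of part (ii): you assert that because $M$ is torsion-free, every $x_{\pm\alpha}$, $\alpha\in\Delta_{\bar{0}}$, acts injectively on $M$. This is not a consequence of the definition. Torsion-freeness says only that the monoid generated by $\mbox{inj }M$ is a subgroup of finite index in $Q$, and a \emph{proper} symmetric subset of $\Delta_{\bar{0}}$ can generate such a subgroup as soon as $\mathfrak{g}_{\bar{0}}$ has a simple factor of rank at least $2$: for instance $\{\pm(\epsilon_1-\epsilon_2),\pm(\epsilon_1-\epsilon_3)\}$ already generates the full root lattice of $\mathfrak{sl}_3$, a situation arising inside $\mathfrak{psl}(n|n)$ and $\mathfrak{osp}(6|2n)$, both within the stated scope of the theorem. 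The implication ``cuspidal $\Rightarrow$ all even root vectors act injectively'' is itself a nontrivial result of \cite{DMP} (in this paper it appears only as Theorem~\ref{characterize}, stated for $D(2,1,\alpha)$ and again without proof), so you must either cite it or prove it; Theorem~\ref{equivalence} alone does not deliver it. For $D(2,1,\alpha)$ the repair is cheap and you should have made it: $\alpha_1,\alpha_2,\alpha_3$ are linearly independent, so a subset of $\Delta_{\bar{0}}$ whose generated monoid is a group of finite index in $Q$ must be symmetric and spanning, hence all of $\Delta_{\bar{0}}$.

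A secondary caveat: both halves of your argument invoke $[Q:Q_{\bar{0}}]<\infty$ as though it held for every basic Lie superalgebra. It holds for $D(2,1,\alpha)$ (index $2$, Remark~\ref{lattice}), but fails for type I algebras such as $\mathfrak{sl}(m|n)$ with $m\neq n$ and $\mathfrak{osp}(2|2n)$, where $Q/Q_{\bar{0}}$ is infinite; for those your part (i) cannot conclude that simple submodules of $\widetilde{N}$ are torsion-free, and indeed the statement itself must be read with care there. Restricted to $D(2,1,\alpha)$ --- which is all this paper ever uses --- your proof is complete once the injectivity step above is patched.
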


\subsection{Coherent families}

Let $C(\mathfrak{h})$ denote the centralizer of $\mathfrak{h}$ in $U(\mathfrak{g}_{\bar{0}})$.  A {\em $(\mathfrak{g},\mathfrak{g}_{\bar{0}})$-coherent family of degree $d$} is a finite weight $\mathfrak{g}$-module $M$ such that $\mbox{dim }M_{\mu}=d$ for all $\lambda\in\mathfrak{h}^{*}$ and the function $\mu\mapsto \mbox{Tr }u|_{M_{\mu}}$ is polynomial in $\mu$, for all $u\in C(\mathfrak{h})$ \cite{Gr03, Gr06, M00}.

\begin{example}[Mathieu \cite{M00}]\label{sl2}
Let $\mathfrak{g}=\mathfrak{sl}_2$ and fix $a\in\mathbb{C}$.  Define a module $V(a)=\oplus_{s\in\mathbb{C}}\mathbb{C}x^{s}$ with the following $\mathfrak{sl}_2$ action.
\begin{align*}
&e\mapsto x^{2}\ d/dx +ax &e.x^{s}=(a+s)x^{s+1}\\
&f\mapsto -d/dx +a(1/x) &f.x^{s}=(a-s)x^{s-1}\\
&h\mapsto 2x\ d/dx  &h.x^{s}=(2s)x^{s}
\end{align*}

For each $a\in\mathbb{C}$, $V(a)$ is a $\mathfrak{sl}_2$-coherent family.  For each $[\mu]\in\mathbb{C}/\mathbb{Z}$ with representative $\mu\in\mathfrak{h}^{*}$,  $$V(a)^{[\mu]}:=\oplus_{n\in\mathbb{Z}}\mathbb{C}x^{\mu+n}$$ is a submodule, which is simple and cuspidal if and only if $\mu\pm a \not\in \mathbb{Z}$.
\end{example}

Let $\Gamma=\{\gamma_1,\ldots,\gamma_k\}\subset\Delta_{\bar{0}}^{+}$ be a set of commuting roots, and for each $\gamma_i\in\Gamma$ choose $f_{i}\in\mathfrak{g}_{-\gamma_i}$. Let $U_{\Gamma}$ be the localization of $U(\mathfrak{g})$ at the set $\{f_{i}^{n}\mid n\in\mathbb{N}, \gamma_i\in\Gamma\}$.
If $\Gamma\subset \mbox{inj }L(\lambda)$, define the {\em localization of $L(\lambda)$ at $\Gamma$} to be the module $L(\lambda)_{\Gamma}:=U_{\Gamma}\otimes_{U(\mathfrak{g})} L(\lambda)$.  Then $L(\lambda)$ is a submodule of $L(\lambda)_{\Gamma}$ and $\mbox{deg }L(\lambda)_{\Gamma}=\mbox{deg }L(\lambda)$.

Now for each $\mu\in\mathbb{C}^{k}$, we define a new module $L(\lambda)_{\Gamma}^{\mu}$ whose underlying vector space is $L(\lambda)_{\Gamma}$, but with a new action of $\mathfrak{g}$ defined as follows.  For $u\in U_{\Gamma}$ and $x\in L(\lambda)_{\Gamma}^{\mu}$, $$u\cdot x:= \Phi^{\mu}_{\Gamma}(u)v,$$ where
$$\Phi^{\mu}_{\Gamma}(u)=\sum_{0\leq i_1,\ldots,i_k} {\mu_1 \choose i_1} \dots {\mu_k \choose i_k} \mbox{ad}(f_1)^{i_1}\dots\mbox{ad}(f_k)^{i_k}(u)f_1^{-i_1}\dots f_k^{-i_k}.
$$
Note that this sum is finite for each choice of $u$.
The module $L(\lambda)_{\Gamma}^{\mu}$ is called the {\em twisted localization of $L(\lambda)$ with respect to $\Gamma$ and  $\mu$}, and it is a $(\mathfrak{g},\mathfrak{g}_{\bar{0}})$-coherent family of degree $d=\mbox{deg }L(\lambda)$.

\begin{theorem}[Mathieu \cite{M00}; Grantcharov \cite{Gr09}]\label{simple}  Let $\mathfrak{g}$ be a basic Lie superalgebra. Each simple finite weight $\mathfrak{g}$-module $M$ is a twisted localization of a simple highest weight module $L_{\mathfrak{b}}(\lambda)$ for some Borel subalgebra $\mathfrak{b}\subset\mathfrak{g}$ and $\lambda\in\mathfrak{h}^{*}$. In particular, $M\cong L_{\mathfrak{b}}(\lambda)_{\Gamma}^{\mu}$ for some $\mu\in\mathbb{C}$ and set of commuting even roots  $\Gamma$.
\end{theorem}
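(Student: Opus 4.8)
The plan is to follow Mathieu's strategy \cite{M00} and its superalgebra extension by Grantcharov \cite{Gr09}, reducing the statement to the cuspidal case and then realizing any cuspidal module as a twisted localization of a highest weight module. First I would \emph{reduce to cuspidal modules}. By Theorem~\ref{cusp} the simple finite weight module $M$ is parabolically induced from a cuspidal module $S$ of a Levi subalgebra $\mathfrak{l}$, say $M=L_{\mathfrak{p}}(S)$ with $\mathfrak{p}=\mathfrak{l}\oplus\mathfrak{n}$. Since parabolic induction sends a highest weight $\mathfrak{l}$-module to a highest weight $\mathfrak{g}$-module for the Borel $\mathfrak{b}=\mathfrak{b}_{\mathfrak{l}}\oplus\mathfrak{n}$, and since localization and twisting at a set of commuting roots $\Gamma\subset\mathfrak{l}$ commute with $\mathrm{Ind}_{\mathfrak{p}}^{\mathfrak{g}}$, it suffices to prove that every cuspidal module is a twisted localization of a highest weight module. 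I therefore assume henceforth that $M$ itself is cuspidal.

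Next I would \emph{embed $M$ in an ambient coherent family}. By Theorem~\ref{equivalence} the module $M$ is torsion free, so the monoid generated by $\mathrm{inj}\,M$ has finite index in $Q$; I would choose a set $\Gamma=\{\gamma_1,\dots,\gamma_k\}$ of commuting even roots inside $\mathrm{inj}\,M$ whose $\mathbb{Z}$-span has finite index in $Q_{\bar 0}$, and pick $f_i\in\mathfrak{g}_{-\gamma_i}$. As each $f_i$ already acts injectively, $M$ embeds into its localization $M_\Gamma$, whose support is a full union of $\mathbb{Z}\Gamma$-cosets and which has the same degree as $M$. Forming the twists $M_\Gamma^\mu$ for $\mu\in\mathbb{C}^k$ and passing to the semisimplification (legitimate since bounded modules have finite length) produces a $(\mathfrak{g},\mathfrak{g}_{\bar 0})$-coherent family $\mathcal{M}$ of degree $\deg M$ containing $M$ as the component supported on the coset of $M$. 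The technical content here is Mathieu's: one must check that the trace functions $\mu\mapsto\mathrm{Tr}\,u|_{(M_\Gamma^\mu)_\nu}$ are genuinely polynomial and that the degree stays uniformly bounded, for which the finite-dimensionality of $U(\mathfrak{g}_{\bar 1})$ (used in the Introduction to bound induced modules) keeps the odd directions under control.

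Finally I would \emph{locate a highest weight component inside $\mathcal{M}$}. I would show that for a suitable value of $\mu$ and a suitable Borel subalgebra $\mathfrak{b}$, the twisted module $M_\Gamma^\mu$ acquires a nonzero vector annihilated by the corresponding $\mathfrak{n}^+$: as $\mu$ is specialized to an appropriate integral value, the injectivity of some positive root vectors fails, and by the dichotomy of \cite{M00} that each even root vector acts either injectively or locally nilpotently on a simple finite weight module, the corresponding action becomes locally nilpotent, forcing a $\mathfrak{b}$-highest weight vector. This yields a highest weight subquotient $L_{\mathfrak{b}}(\lambda)$ for some $\lambda\in\mathfrak{h}^*$ lying in the same family $\mathcal{M}$, and running the localization--twist construction in reverse recovers $M\cong L_{\mathfrak{b}}(\lambda)_\Gamma^{\mu'}$ for a suitable $\mu'\in\mathbb{C}^k$.

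The main obstacle is exactly this coherent-family machinery: the construction and uniqueness of the semisimple family $\mathcal{M}$ together with the verification that it must contain a highest weight module at a boundary coset. For the super case the additional care is in tracking typicality and the odd reflections \cite{KW, S11}, so that the Borel $\mathfrak{b}$ and the weight $\lambda$ are identified consistently across the different systems of simple roots and the highest weight subquotient is correctly matched to $M$.
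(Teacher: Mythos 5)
The paper does not actually prove this theorem --- it is quoted from Mathieu \cite{M00} and Grantcharov \cite{Gr09} --- and your plan reconstructs precisely the strategy of those cited sources: reduction to the cuspidal case via Theorem~\ref{cusp}, embedding of a cuspidal module into a $(\mathfrak{g},\mathfrak{g}_{\bar{0}})$-coherent family by twisted localization, and extraction of a bounded highest weight constituent from that family, with the localization--twist then inverted to recover $M$. The one step to sharpen in a full write-up is the last one: local nilpotence of \emph{some} positive root vectors on a subquotient does not by itself force a $\mathfrak{b}$-highest weight vector (one needs Mathieu's full polynomial/Zariski-closure argument that an irreducible semisimple coherent family contains a bounded highest weight module, and, for the reduction step in the super setting, Grantcharov's verification that twisted localization is compatible with passing to the unique simple quotient of $\mathrm{Ind}_{\mathfrak{p}}^{\mathfrak{g}}$), but since you defer exactly to \cite{M00} and \cite{Gr09} at those points, your proposal matches the paper's own treatment of this statement.
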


\begin{remark} If $M$ is a cuspidal or bounded module, then $L_{\mathfrak{b}}(\lambda)$ is necessarily bounded.
\end{remark}

\section{The Lie superalgebra $D(2,1,\alpha)$}

For each $\alpha\in\mathbb{C}\setminus\{0,-1\}$, the Lie superalgebra $\mathfrak{g}=D(2,1,\alpha)$ can be realized as a contragredient Lie superalgebra $\mathfrak{g}(A)$ with Cartan matrix \begin{equation}\label{A} A=\left(\begin{array}{ccc}0 & 1 & \alpha\\
1 & 0 & -\alpha-1\\
 \alpha & -\alpha-1 & 0\end{array}\right),\end{equation}
set of simple roots $\Pi=\{\beta_1,\beta_2,\beta_3\}$ with parity $(1,1,1)$,  generating set $$\{e_i\in\mathfrak{g}_{\beta_i}, f_i\in\mathfrak{g}_{-\beta_i}, h_i\in\mathfrak{h}\mid i=1,2,3\}$$ and defining relations \cite{K77}.
Then $\mathfrak{g}_{\bar{0}}=\mathfrak{sl}_2 \times \mathfrak{sl}_2 \times \mathfrak{sl}_2$ is 9-dimensional with \begin{equation*}\Pi_{\bar{0}}=\{\beta_1+\beta_2,\beta_1+\beta_3,\beta_2+\beta_3\},\end{equation*}
and $\mathfrak{g}_{\bar{1}}$ is the 8-dimensional $\mathfrak{g}_{\bar{0}}$-module given by tensoring three copies of the standard representation of $\mathfrak{sl}_2$.
Our choice of $\Pi$ induces a triangular decomposition $\Delta=\Delta^{+}\cup\Delta^{-}$ where

\begin{equation}\label{triangleD} \Delta_{\bar{0}}^{+}=\{\beta_1+\beta_2,\beta_1+\beta_3,\beta_2+\beta_3\} \text{ and } \Delta_{\bar{1}}^{+}=\{\beta_1,\beta_2,\beta_3,\beta_1+\beta_2+\beta_3\}.\end{equation}

\begin{remark}\label{lattice}
For $\mathfrak{g}=D(2,1,\alpha)$, the even root lattice $Q_{\bar{0}}$ is a sublattice of index 2 in $Q$.
\end{remark}

$D(2,1,\alpha)$ has four different Dynkin diagrams.

\begin{equation*}\label{diagrams}
\begin{array}{c}\xymatrix{& \OX \AW[ldd]^{1}_{1} \AW[rdd]^{\alpha}_{\alpha} & \\
& & \\ \OX \AW[rr]^{-1-\alpha}_{-1-\alpha} & & \OX } \end{array}
\begin{array}{c}
\xymatrix{ \O \AW[r]^{-1}_{1} & \OX \AW[r]^{\alpha}_{-1} & \O }\\
\xymatrix{ \O \AW[r]^{-1}_{-1-\alpha} & \OX \AW[r]^{\alpha}_{-1} & \O }\\
\xymatrix{ \O \AW[r]^{-1}_{1} & \OX \AW[r]^{-1-\alpha}_{-1} & \O }\end{array}
\end{equation*}

\begin{remark}
The Cartan matrix $A$ in (\ref{A}) is equivalent to the diagram on the left, and corresponds to $\Delta^+$ given in (\ref{triangleD}).  Most of our computations will be with respect to this choice of the set of simple roots, since here $\rho=0$ and since no set of simple roots of $\mathfrak{g}=D(2,1,\alpha)$ contains a set of simple roots for $\mathfrak{g}_{\bar{0}}$.
\end{remark}

\subsection{Finite weight modules for $D(2,1,\alpha)$}\label{cuspD}

In this section we study finite weight modules for the basic Lie superalgebra $D(2,1,\alpha)$.

It was shown in \cite{DMP} that every simple finite weight module is obtained by parabolic induction from a cuspidal module $L_{\mathfrak{p}}(S)$ with $\mathfrak{p}=\mathfrak{l}\oplus\mathfrak{n}$, such that either $\mathfrak{l}$ is a proper reductive subalgebra of $\mathfrak{g}_{\bar{0}}=\mathfrak{sl}_2\times\mathfrak{sl}_2\times\mathfrak{sl}_2$ or $\mathfrak{l}=\mathfrak{g}$. Cuspidal modules for $\mathfrak{sl}_2$ are classified in Example~\ref{sl2}, and all cuspidal modules for $\mathfrak{sl}_2\times\mathfrak{sl}_2$ can be obtained by tensoring two cuspidal $\mathfrak{sl}_2$-modules together, namely, $$V(a_1)^{[\mu_{1}]}\otimes V(a_2)^{[\mu_2]}\text{ with }\mu_i\pm a_i,\not\in\mathbb{Z},\ i=1,2.$$
So it remains to describe the cuspidal modules for $\mathfrak{g}=D(2,1,\alpha)$. The following theorems will help us realize these modules.

\begin{theorem}\label{thmVerma} Let $\mathfrak{g}=D(2,1,\alpha)$.
For each set of positive roots $\Delta^{+}$ and each $\lambda\in\mathfrak{h}^{*}$, the Verma module $M(\lambda)$ is bounded.  $M(\lambda)$ has degree $8$ and  graded degree $(8,8)$.
\end{theorem}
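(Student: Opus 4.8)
The plan is to read off the weight multiplicities of $M(\lambda)$ from the PBW theorem and the Verma character formula, and then to extract the maximal multiplicity by a parity/counting argument tied to Remark~\ref{lattice}. First I would recall that $M(\lambda)\cong U(\mathfrak{n}^-)v_\lambda$ as an $\mathfrak{h}$-module, and that the PBW theorem gives $U(\mathfrak{n}^-)\cong S(\mathfrak{n}_{\bar 0}^-)\otimes\Lambda(\mathfrak{n}_{\bar 1}^-)$ as $\mathfrak{h}$-modules, equivalently
\[
\mathrm{ch}\,M(\lambda)=e^{\lambda}\,\frac{\prod_{\alpha\in\Delta^+_{\bar 1}}(1+e^{-\alpha})}{\prod_{\alpha\in\Delta^+_{\bar 0}}(1-e^{-\alpha})}.
\]
Thus $\dim M(\lambda)_\mu$ is the number of PBW monomials of weight $\mu$: writing $\{\gamma_1,\gamma_2,\gamma_3\}=\Delta^+_{\bar 0}$ and $\{\delta_1,\delta_2,\delta_3,\delta_4\}=\Delta^+_{\bar 1}$, this is the number of tuples $(a_1,a_2,a_3;\epsilon_1,\ldots,\epsilon_4)\in\mathbb{Z}_{\ge 0}^{3}\times\{0,1\}^{4}$ with $\lambda-\sum_i a_i\gamma_i-\sum_j\epsilon_j\delta_j=\mu$.

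For the upper bound I would first observe that since $\mathfrak{g}_{\bar 0}=\mathfrak{sl}_2\times\mathfrak{sl}_2\times\mathfrak{sl}_2$ the roots $\gamma_1,\gamma_2,\gamma_3$ form a basis of $\mathfrak{h}^{*}$; hence for each fixed $(\epsilon_1,\ldots,\epsilon_4)\in\{0,1\}^4$ there is at most one triple $(a_1,a_2,a_3)$ solving the weight equation, giving the crude bound $\dim M(\lambda)_\mu\le 16$. To sharpen this to $8$ I would invoke Remark~\ref{lattice}: since $Q_{\bar 0}$ has index $2$ in $Q$ and the parity of a weight coincides with its class in $Q/Q_{\bar 0}\cong\mathbb{Z}/2\mathbb{Z}$, every odd root $\delta_j$ lies in the nontrivial coset. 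Therefore $\sum_j\epsilon_j\delta_j$ lies in the coset determined by $\sum_j\epsilon_j\bmod 2$, while $\sum_i a_i\gamma_i\in Q_{\bar 0}$. Consequently only those sign vectors $(\epsilon_j)$ whose parity is dictated by $\mu$ can contribute, and exactly $8$ of the $16$ sign vectors have each parity. This yields $\dim M(\lambda)_\mu\le 8$ for all $\mu$, so $M(\lambda)$ is bounded; the same observation shows each weight space is purely even or purely odd, whence $\deg M(\lambda)=\max\{d_0,d_1\}$.

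To see the bound is attained in both parities, I would evaluate the multiplicity deep in the support, say at $\mu=\lambda-n(\beta_1+\beta_2+\beta_3)$. For each eligible $(\epsilon_j)$ the unique candidate triple is $a_i=\tfrac12(m_j+m_k-m_i)$, where $m_i$ is the relevant coordinate of the shifted target, a bounded perturbation of $n$; for $n$ large all these are nonnegative integers, so all $8$ eligible sign vectors contribute and $\dim M(\lambda)_\mu=8$. Taking $n$ even realizes $d_0=8$ and $n$ odd realizes $d_1=8$, giving graded degree $(8,8)$ and, by the previous paragraph, $\deg M(\lambda)=8$.

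The substantive point is the upper bound, and it follows cleanly from the index-$2$ fact, which is intrinsic to $\mathfrak{g}$ and hence independent of the choice of $\Delta^{+}$; likewise the count of $8$ eligible sign vectors is Borel-independent, since every odd root is nontrivial in $Q/Q_{\bar 0}$ for any Borel. The only delicate point I anticipate is the attainment step: one must verify that the support can be entered far enough that the nonnegativity and integrality constraints on $(a_1,a_2,a_3)$ hold simultaneously for all $8$ eligible sign vectors, in each parity and uniformly across the four Dynkin diagrams. This is routine but should be stated so as to cover every choice of $\Delta^{+}$ at once.
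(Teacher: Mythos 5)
Your proposal is correct and is essentially the paper's own argument: both compute $\mbox{ch }M(\lambda)=e^{\lambda}R_{1}/R_{0}$ (yours via PBW) and obtain the bound $8$ from the same parity count --- only the $8$ of the $16$ subsets of $\Delta^{+}_{\bar{1}}$ whose cardinality has the parity dictated by the class of $\lambda-\mu$ in $Q/Q_{\bar{0}}$ can contribute, each contributing at most one monomial since the three roots in $\Delta^{+}_{\bar{0}}$ are linearly independent --- with attainment deep along the diagonal direction giving degree $8$ in each parity. The only difference is bookkeeping: the paper handles the quantifier over all $\Delta^{+}$ by invoking invariance of $e^{\rho}R$ under odd reflections and then expanding explicitly for the simple root system (\ref{triangleD}), whereas you keep the count uniform in the Borel via Remark~\ref{lattice}; both routes are valid.
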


\begin{proof}
The dimensions of the weight spaces of $M(\lambda)$ are given by the coefficients of the character formula $\mbox{ch }M(\lambda)=\frac{e^{\lambda+\rho}}{e^{\rho}R}=e^{\lambda}\frac{R_{1}}{R_{0}}$, where $R_{0}=\Pi_{\alpha\in\Delta_{\bar{0}}^{+}}(1-e^{-\alpha})$ and $R_{1}=\Pi_{\alpha\in\Delta_{\bar{1}}^{+}}(1+e^{-\alpha})$. Since $e^{\rho}R$ is invariant under odd reflections, it is sufficient to prove the theorem with respect to the set of positive roots $\Delta^{+}$ from (\ref{triangleD}).
\begin{equation*}
\begin{aligned}
\mbox{ch }M(\lambda)&=e^{\lambda} \frac{(1+e^{-\beta_1})(1+e^{-\beta_2})(1+e^{-\beta_3})(1+e^{-\beta_1-\beta_2-\beta_3})}
{(1-e^{-\beta_1-\beta_2})(1-e^{-\beta_1-\beta_3})(1-e^{-\beta_2-\beta_3})}\\
&=e^{\lambda}(1+e^{-\beta_1})(1+e^{-\beta_2})(1+e^{-\beta_3})(1+e^{-\beta_1-\beta_2-\beta_3})\cdot\left(\sum_{\substack{k_1,k_2,k_3\in\mathbb{N}:\\
k_1+k_2+k_3\text{ is even}}}e^{-k_1\beta_1-k_2\beta_2-k_3\beta_3}\right)
\end{aligned}
\end{equation*}
So for $m_1,m_2,m_3\in\mathbb{N}$ sufficiently large, the $(\lambda-m_1\beta_1-m_2\beta_2-m_3\beta_3)$ weight space of $M(\lambda)$ is purely even with dimension ${4\choose 0} + {4\choose 2} + {4\choose 4}=8$ when $m_1+m_2+m_3$ is even,  (which corresponds to a choice of an even number of odd roots from $\Delta_{\bar{1}}^{-}$), and it is purely odd with dimension ${4\choose 1} +{4\choose 3}=8$ when $m_1+m_2+m_3$ is odd,   (corresponding to a choice of an odd number of odd roots from $\Delta_{\bar{1}}^{-}$).
\end{proof}

\begin{corollary}\label{corHW}
A highest weight module is bounded and has degree less than or equal to $8$.
\end{corollary}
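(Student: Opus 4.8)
The plan is to deduce this directly from Theorem~\ref{thmVerma} via the universal property of Verma modules. First I would fix a set of positive roots $\Delta^{+}$ and let $N$ be a highest weight module with highest weight $\lambda\in\mathfrak{h}^{*}$ with respect to the corresponding Borel subalgebra; by definition $N$ is generated by a single weight vector $v_{\lambda}$ of weight $\lambda$ that is annihilated by $\mathfrak{n}^{+}$. Since $M(\lambda)=\mbox{Ind}_{\mathfrak{b}}^{\mathfrak{g}}\mathbb{C}_{\lambda}$ is universal among highest weight modules of highest weight $\lambda$, there is a surjective $\mathfrak{g}$-module homomorphism $\pi:M(\lambda)\twoheadrightarrow N$ sending the canonical highest weight vector of $M(\lambda)$ to $v_{\lambda}$.

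The key observation is that $\pi$ is weight-preserving: it is $\mathfrak{h}$-equivariant, hence it restricts to a surjection $M(\lambda)_{\mu}\twoheadrightarrow N_{\mu}$ on each weight space. Consequently $\mbox{dim }N_{\mu}\leq\mbox{dim }M(\lambda)_{\mu}$ for every $\mu\in\mathfrak{h}^{*}$. By Theorem~\ref{thmVerma}, $\mbox{dim }M(\lambda)_{\mu}\leq 8$ for all $\mu$ (indeed the degree of $M(\lambda)$ equals $8$), and therefore $\mbox{dim }N_{\mu}\leq 8$ for all $\mu$. This shows that $N$ is bounded with $\mbox{deg }N\leq 8$, as claimed.

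I do not anticipate any genuine obstacle here; the entire difficulty has already been absorbed into the boundedness of Verma modules established in Theorem~\ref{thmVerma}, and the corollary is a formal consequence of the fact that passing to a quotient cannot increase the dimension of any weight space. The only minor point worth flagging is that the notion of ``highest weight'' depends on the chosen set of simple roots, but since Theorem~\ref{thmVerma} is proved for \emph{each} set of positive roots $\Delta^{+}$, the bound $8$ is uniform across all Borel subalgebras of $\mathfrak{g}=D(2,1,\alpha)$.
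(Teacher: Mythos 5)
Your proof is correct and is exactly the argument the paper intends (the paper leaves the corollary unproved as an immediate consequence of Theorem~\ref{thmVerma}): every highest weight module is a quotient of a Verma module via a weight-preserving surjection, so its weight multiplicities are dominated by those of $M(\lambda)$, which are at most $8$ for every choice of $\Delta^{+}$. Nothing is missing.
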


Since every simple weight module of $D(2,1,\alpha)$ is a finite weight module, we can combine Theorem~\ref{simple} with Corollary~\ref{corHW} to obtain the following.

\begin{theorem}\label{small}
For $\mathfrak{g}=D(2,1,\alpha)$, any simple weight module $M$ is bounded and has degree less than or equal to $8$.
\end{theorem}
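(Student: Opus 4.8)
The plan is to derive this statement as a direct consequence of the two preceding results, Theorem~\ref{simple} and Corollary~\ref{corHW}, together with the degree-invariance of twisted localization. First I would verify the hypothesis needed to invoke Theorem~\ref{simple}, namely that $M$ is a \emph{finite} weight module. This is immediate from the discussion in the introduction: since $\mathfrak{g}_{\bar{0}}=\mathfrak{sl}_2\times\mathfrak{sl}_2\times\mathfrak{sl}_2$, a simple weight $\mathfrak{g}_{\bar{0}}$-module has one-dimensional weight spaces (the Casimir of each $\mathfrak{sl}_2$-factor acts by a scalar), and any simple weight $\mathfrak{g}$-module $M$ is a quotient of $\mbox{Ind}_{\mathfrak{g}_{\bar{0}}}^{\mathfrak{g}}V$ for such a $V$; as $\dim U(\mathfrak{g}_{\bar{1}})=256<\infty$, every weight space of $M$ is finite dimensional.

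Having established that $M$ is a simple finite weight module, I would apply Theorem~\ref{simple} to write $M\cong L_{\mathfrak{b}}(\lambda)_{\Gamma}^{\mu}$ for some Borel subalgebra $\mathfrak{b}\subset\mathfrak{g}$, weight $\lambda\in\mathfrak{h}^{*}$, set of commuting even roots $\Gamma$, and $\mu$. By the properties of (twisted) localization recorded before Theorem~\ref{simple}, the module $L_{\mathfrak{b}}(\lambda)_{\Gamma}^{\mu}$ has the same underlying space as $L_{\mathfrak{b}}(\lambda)_{\Gamma}$, and its degree satisfies $\mbox{deg }M=\mbox{deg }L_{\mathfrak{b}}(\lambda)_{\Gamma}^{\mu}=\mbox{deg }L_{\mathfrak{b}}(\lambda)_{\Gamma}=\mbox{deg }L_{\mathfrak{b}}(\lambda)$. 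Finally, Corollary~\ref{corHW} bounds the degree of the highest weight module $L_{\mathfrak{b}}(\lambda)$ by $8$, whence $\mbox{deg }M\leq 8$ and $M$ is bounded.

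The one point that must be checked, rather than an obstacle, is that the bound of Corollary~\ref{corHW} is uniform over all Borel subalgebras, since Theorem~\ref{simple} may produce a Borel $\mathfrak{b}$ different from the one fixed in (\ref{triangleD}). This is not a genuine difficulty: Theorem~\ref{thmVerma} is proved for every choice of positive roots $\Delta^{+}$ (using the odd-reflection invariance of $e^{\rho}R$), so Corollary~\ref{corHW}, and hence the estimate $\mbox{deg }L_{\mathfrak{b}}(\lambda)\leq 8$, holds for any $\mathfrak{b}$. Consequently the theorem is essentially a formal corollary of the boundedness of Verma modules, and all of the arithmetic content lives in the character computation of Theorem~\ref{thmVerma}.
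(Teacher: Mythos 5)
Your proposal is correct and follows exactly the paper's route: the paper proves Theorem~\ref{small} by observing that every simple weight module of $D(2,1,\alpha)$ is a finite weight module (via the induction argument from $\mathfrak{g}_{\bar{0}}$ given in the introduction) and then combining Theorem~\ref{simple} with Corollary~\ref{corHW}, using that twisted localization preserves degree. Your additional check that Corollary~\ref{corHW} is uniform over all Borel subalgebras is a worthwhile point the paper leaves implicit, but it is the same argument, since Theorem~\ref{thmVerma} is stated for every choice of positive roots.
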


\begin{remark}
In Theorem~\ref{small}, we do not assume that $M$ is a highest weight module.
\end{remark}

Let $\Delta^{+}$ be as in (\ref{triangleD}), and let $\alpha_1=\beta_2+\beta_3$, $\alpha_2=\beta_1+\beta_3$, $\alpha_3=\beta_1+\beta_2$, so that $\Pi_{\bar{0}}=\{\alpha_1,\alpha_2,\alpha_3\}$.
Then for each $\alpha_i\in\Pi_{\bar{0}}$, $i=1,2,3$, we have an $\mathfrak{sl}_2$-triple $\{E_{i},F_{i},H_{i}\}$ with $E_{i}\in\mathfrak{g}_{\alpha_{i}}$, $F_{i}\in\mathfrak{g}_{-\alpha_{i}}$ and $H_i=[E_i,F_i]$ satisfying $\alpha_{i}(H_{i})=2$.
For each $\lambda\in\mathfrak{h}^{*}$, $i=1,2,3$, let $\lambda_i=\lambda(h_i)$ and $c_i=\lambda(H_i)$.
Then
\begin{equation}\label{cs}
c_{1}=\frac{\lambda_2+\lambda_3}{-\alpha-1},\ c_2=\frac{\lambda_1+\lambda_3}{\alpha},\ c_3=\lambda_1+\lambda_2.\end{equation}

\begin{theorem}\label{conditions} Let $\mathfrak{g}=D(2,1,\alpha)$ and let $\Delta^+$ be as in (\ref{triangleD}). Then for each $\lambda\in\mathfrak{h}^{*}$, we have that $\mbox{inj }L(\lambda)=\Delta_{\bar{0}}^{-}$ if and only if $c_1,c_2,c_3\not\in\mathbb{Z}_{\geq 1}$ and at most one of $\lambda_1,\lambda_2,\lambda_3$ equals zero.
\end{theorem}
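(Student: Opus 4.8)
The plan is to translate the statement about $\mbox{inj }L(\lambda)$ into a question about the three commuting $\mathfrak{sl}_2$-triples $\{E_i,F_i,H_i\}$ and to lean on the lemma that on a simple finite weight module each even root vector acts either injectively or locally nilpotently. First I would observe that the positive even roots can be discarded for free: since $L(\lambda)$ is a highest weight module for the Borel attached to (\ref{triangleD}), its highest weight vector $v_\lambda$ is killed by every $E_i$, so no $E_i$ is injective and hence $\mbox{inj }L(\lambda)\subseteq\Delta_{\bar{0}}^{-}$ automatically. Thus $\mbox{inj }L(\lambda)=\Delta_{\bar{0}}^{-}$ is equivalent to the injectivity of all three $F_i$, and the whole theorem reduces to deciding, for each $i$, precisely when $F_i$ acts injectively.

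The core step is the following lemma: for each $i$ and each $j\neq i$, the vector $F_i$ acts injectively on $L(\lambda)$ if and only if $c_i\notin\mathbb{Z}_{\geq 1}$ when $\lambda_j\neq 0$, and if and only if $c_i\notin\mathbb{Z}_{\geq 0}$ when $\lambda_j=0$. To prove it I apply the odd reflection $r_{\beta_j}$, which yields a Borel with simple roots $\{-\beta_j,\alpha_i,\alpha_k\}$; the point is that $\alpha_i$ is now an \emph{even simple} root. By the odd reflection rule recalled above, $L(\lambda)=L_{\Pi^{(j)}}(\lambda^{(j)})$ with $\lambda^{(j)}=\lambda-\beta_j$ if $\lambda_j\neq 0$ and $\lambda^{(j)}=\lambda$ otherwise; let $v$ be the new highest weight vector, so $E_iv=0$ and $H_iv=n_iv$ with $n_i=\lambda^{(j)}(H_i)$. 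If $n_i\notin\mathbb{Z}_{\geq 0}$ then no power $F_i^m v$ vanishes and $v$ generates an $\mathfrak{sl}_2$-Verma module, so $F_i$ is not locally nilpotent and is therefore injective on $L(\lambda)$ by the dichotomy; if $n_i\in\mathbb{Z}_{\geq 0}$ then $F_i^{n_i+1}v$ is a singular vector for $\Pi^{(j)}$, hence vanishes in $L(\lambda)$, forcing $F_i$ to kill $F_i^{n_i}v\neq 0$ and thus to be locally nilpotent. A one-line coroot computation gives $\beta_j(H_i)=1$, so $n_i=c_i-1$ in the first case and $n_i=c_i$ in the second, producing the two stated thresholds.

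With the lemma established the theorem is a short bookkeeping argument, once one notes that ``at most one of $\lambda_1,\lambda_2,\lambda_3$ equals zero'' is exactly the statement that for every $i$ there is some $j\neq i$ with $\lambda_j\neq 0$. Assuming the right-hand side, for each $i$ I pick such a $j$ and apply the $\lambda_j\neq 0$ case together with $c_i\notin\mathbb{Z}_{\geq 1}$ to conclude $F_i$ is injective, giving $\mbox{inj }L(\lambda)=\Delta_{\bar{0}}^{-}$. Conversely, if two of the $\lambda_j$ vanish, say $\lambda_j=\lambda_k=0$, then relation (\ref{cs}) forces $c_i=0$ for the remaining index $i$, and the $\lambda_j=0$ case of the lemma shows $F_i$ is not injective; hence $\mbox{inj }L(\lambda)=\Delta_{\bar{0}}^{-}$ already forces at most one $\lambda_j$ to vanish, after which the $\lambda_j\neq 0$ case forces each $c_i\notin\mathbb{Z}_{\geq 1}$.

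I expect the main obstacle to be the verification that $F_i^{n_i+1}v$ is a genuine $\Pi^{(j)}$-singular vector, i.e.\ that it is annihilated not only by $E_i$ (the $\mathfrak{sl}_2$ computation) but also by the other positive simple root vectors $E_k$ and $f_j$ of the new Borel. This amounts to checking that $\alpha_k-\alpha_i$ and $-\beta_j-\alpha_i$ are not roots of $\mathfrak{g}$, so that the relevant brackets with $F_i$ vanish, together with the fact that $f_jv=0$. A secondary subtlety worth recording is the internal consistency of the lemma under different choices of $j$: it is guaranteed precisely by the linear relations (\ref{cs}), which prevent a boundary value such as $c_i=0$ from coexisting with exactly one of $\lambda_j,\lambda_k$ being zero, so that the $\mathbb{Z}_{\geq 0}$- and $\mathbb{Z}_{\geq 1}$-criteria can never genuinely disagree.
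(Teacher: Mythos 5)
Your proposal is correct and takes essentially the same route as the paper: reduce to the injectivity of $F_1,F_2,F_3$, apply the odd reflection $r_{\beta_j}$ to get $\Pi'=\{-\beta_j,\alpha_i,\alpha_k\}$, and test whether the reflected highest weight evaluated on $H_i$ lies in $\mathbb{Z}_{\geq 0}$, yielding exactly the thresholds $c_i\notin\mathbb{Z}_{\geq 1}$ (when $\lambda_j\neq 0$) and $c_i\notin\mathbb{Z}_{\geq 0}$ (when $\lambda_j=0$). The only deviations are that you actually prove the injectivity criterion (via the injective-or-locally-nilpotent dichotomy and vanishing of singular vectors in a simple module) where the paper invokes it as known, and you handle the case of two vanishing $\lambda_j$'s through $c_i=0$ rather than the paper's bracket argument $F_kv=[f_i,f_j]v=0$; both are sound, and your consistency check between the two thresholds via (\ref{cs}) is a worthwhile detail the paper leaves implicit.
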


\begin{proof}
Let $\Pi$ denote the set of simple roots of $\Delta^{+}$. Then $F_i$ acts injectively on $L(\lambda)$ if and only if given a set of simple roots $\Pi'$ containing $\alpha_i$ which can be obtained by a sequence of odd reflections from $\Pi$,  we have that $\lambda'(H_i)\not\in\mathbb{Z}_{\geq 0}$, where $\lambda'\in\mathfrak{h}^{*}$ satisfies $L_{\Pi'}(\lambda')=L_{\Pi}(\lambda)$.

Now if $\lambda_i,\lambda_j=0$, then $f_iv=0$ and $f_jv=0$ imply $F_kv=[f_i,f_j]v=0$.  Hence, if $F_1,F_2,F_3$ act injectively on $L(\lambda)$, then at most one of $\lambda_1,\lambda_2,\lambda_3$ equals zero. By reflecting at  $\beta_j$ we get $\Pi'=\{-\beta_j,\alpha_i,\alpha_k\}$ where $i\neq j\neq k$. If $\beta_j$ is a typical root ($\lambda_j\neq0$) then $F_i$ acts injectively iff $(\lambda-\beta_j)(H_i)=c_i-1\not\in\mathbb{Z}_{\geq 0}$, and $F_k$ acts injectively iff $(\lambda-\beta_j)(H_k)=c_k-1\not\in\mathbb{Z}_{\geq 0}$.  Hence, if at least two of  $\lambda_1,\lambda_2,\lambda_3$ are non-zero, it follows that $F_1,F_2,F_3$ act injectively iff $c_1,c_2,c_3\not\in\mathbb{Z}_{\geq 1}$.
\end{proof}

\begin{remark}
Let $\Delta^+$ be as in (\ref{triangleD}), then $L(\lambda)$ is typical if and only if $\lambda_1,\lambda_2,\lambda_3\neq 0$ and $\lambda_1+\lambda_2+\lambda_3\neq 0$.
\end{remark}

\begin{theorem} For $\mathfrak{g}=D(2,1,\alpha)$, suppose that $L(\lambda)$ is a simple highest weight module satisfying $\mbox{inj }L(\lambda)=\Delta_{\bar{0}}^{-}$, then $L(\lambda)= M(\lambda)$ iff $L(\lambda)$ is typical.
\end{theorem}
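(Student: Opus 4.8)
The plan is to identify $L(\lambda)=M(\lambda)$ with the irreducibility of $M(\lambda)$, and to read off that irreducibility from the two disjoint families of reducibility conditions attached to a Verma module over a basic Lie superalgebra: one coming from the even roots, one from the (isotropic) odd roots. I would invoke the Kac--Kazhdan/Shapovalov simplicity criterion for $\mathfrak{g}=D(2,1,\alpha)$ (see \cite{K77,M12}): $M(\lambda)$ is simple if and only if $\langle\lambda+\rho,\gamma^\vee\rangle\notin\mathbb{Z}_{\geq 1}$ for each even positive root $\gamma$ and $(\lambda+\rho,\beta)\neq 0$ for each odd positive root $\beta$ (all odd roots here being isotropic). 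Because $\rho=0$ for the choice of $\Delta^+$ in (\ref{triangleD}), and the even positive roots are $\alpha_1,\alpha_2,\alpha_3$ with $\langle\lambda,\alpha_i^\vee\rangle=c_i$, the even conditions are exactly $c_1,c_2,c_3\notin\mathbb{Z}_{\geq 1}$, while the odd conditions are $\lambda_1,\lambda_2,\lambda_3\neq 0$ together with $\lambda_1+\lambda_2+\lambda_3\neq 0$, which is typicality by the Remark following Theorem~\ref{conditions}.

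With this criterion the theorem is immediate. The hypothesis $\mbox{inj }L(\lambda)=\Delta_{\bar{0}}^{-}$ forces $c_1,c_2,c_3\notin\mathbb{Z}_{\geq 1}$ by Theorem~\ref{conditions}, so all even reducibility conditions hold automatically. Hence, under this hypothesis, $M(\lambda)$ is simple if and only if the odd conditions hold, i.e. if and only if $L(\lambda)$ is typical; and $M(\lambda)$ simple is the same as $M(\lambda)=L(\lambda)$. This gives both directions simultaneously.

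To make the argument self-contained I would verify the two implications through singular vectors. The atypical (``only if'') direction is concrete: if $\lambda_i=0$ then $e_jf_iv_\lambda=[e_j,f_i]v_\lambda=\delta_{ij}\lambda_i v_\lambda=0$ for all $j$, so $f_iv_\lambda$ is a nonzero singular vector of weight $\lambda-\beta_i\neq\lambda$ and $M(\lambda)\neq L(\lambda)$; if instead all $\lambda_i\neq 0$ but $\lambda_1+\lambda_2+\lambda_3=0$, I would look inside the five-dimensional space $M(\lambda)_{\lambda-\beta_1-\beta_2-\beta_3}$ (its dimension is read from $\mbox{ch }M(\lambda)$ in Theorem~\ref{thmVerma}) and solve the linear system $e_1 x=e_2 x=e_3 x=0$, which has a nonzero solution exactly when $\lambda_1+\lambda_2+\lambda_3=0$. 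The typical (``if'') direction is the main obstacle: the central character alone is insufficient, since for special typical $\lambda$ (for instance $c_1=c_2=c_3=\tfrac12$) the linkage class of $\lambda$ contains weights strictly below $\lambda$, such as $\lambda-\beta_1-\beta_2-\beta_3$. What saves the argument is that reducibility is governed by the determinant conditions rather than by linkage: the even directions reduce to $\mathfrak{sl}_2$-theory and contribute nothing once $c_i\notin\mathbb{Z}_{\geq 1}$, while the candidate odd singular space at $\lambda-\beta_1-\beta_2-\beta_3$ (and at the other potential weights, reached by moving $\beta_1+\beta_2+\beta_3$ to a simple position via the odd reflections of Theorem~\ref{conditions}) vanishes precisely when $\lambda_1+\lambda_2+\lambda_3\neq 0$. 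Justifying that these local computations exhaust all of the reducibility of $M(\lambda)$—equivalently, the determinant criterion in this setting—is the crux of the proof.
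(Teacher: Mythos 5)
Your proposal is correct and takes essentially the same route as the paper's proof: both invoke the Shapovalov determinant (Kac--Kazhdan) simplicity criterion for $M(\lambda)$, use Theorem~\ref{conditions} to see that the hypothesis $\mbox{inj }L(\lambda)=\Delta_{\bar{0}}^{-}$ rules out vanishing of all even-root factors, and conclude that $M(\lambda)$ is simple iff $(\lambda+\rho,\beta)\neq 0$ for all $\beta\in\Delta_{\bar{1}}$, i.e.\ iff $L(\lambda)$ is typical. The only cosmetic difference is that the paper first reduces to the choice of $\Delta^{+}$ in (\ref{triangleD}) by observing that both properties are preserved under odd reflections, a step you leave implicit.
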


\begin{proof}
Since each of these properties is a module property that is preserved under odd reflections, it suffices to prove the theorem with respect to $\Delta^{+}$ in (\ref{triangleD}).  By Theorem~\ref{conditions}, we have $\mbox{inj }L(\lambda)=\Delta_{\bar{0}}^{-}$ implies $\lambda(H_i)\not\in\mathbb{Z}_{\geq 1}$ for each $\alpha_i\in\Delta_{\bar{0}}^{+}$. Hence,
$$\frac{2(\lambda+\rho,\alpha_i)}{(\alpha_i,\alpha_i)}=\frac{2(\lambda,\alpha_i)}{(\alpha_i,\alpha_i)}=\lambda(H_i)\quad\not\in\mathbb{Z}_{\geq 1},$$ where the first equality is due to the fact that $\rho=0$ for our choice of $\Delta^{+}$, and the second equality is given by the identification of $\mathfrak{h}$ with $\mathfrak{h}^*$.  The claim now follows from computing the Shapovalov determinant using the formula given in \cite[Section 1.2.8]{GK07}.  Since $(\beta,\beta)=0$ for  $\beta\in\Delta_{\bar{1}}$, we conclude that $M(\lambda)$ is simple if and only if  $(\lambda+\rho,\beta)\neq0$ for all $\beta\in\Delta_{\bar{1}}$.
\end{proof}

\newpage
\subsection{Cuspidal modules for $D(2,1,\alpha)$}

The following theorem gives a characterization of cuspidal modules for $D(2,1,\alpha)$.

\begin{theorem}\label{characterize}
Let $\mathfrak{g}=D(2,1,\alpha)$, and let $M$ be a simple weight  $\mathfrak{g}$-module. The following are equivalent
 \begin{enumerate}
\item $M$ is cuspidal;
\item $\mbox{Supp }M$ is exactly one $Q$ coset;
\item  $x_{\alpha}$ acts injectively for all $\alpha\in\Delta_{\bar{0}}$, $x_{\alpha}\in\mathfrak{g}_{\alpha}$.
\end{enumerate}
\end{theorem}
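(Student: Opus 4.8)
The plan is to deduce all three equivalences from the density/torsion-free dichotomy of Theorem~\ref{equivalence}, combined with the very rigid even root system of $D(2,1,\alpha)$. Recall first that $M$ is a simple $\mathfrak{g}$-module, hence generated by any nonzero weight vector, so $\mbox{Supp }M$ is automatically contained in a single $Q$-coset $\mu+Q$; thus condition (2) is really the assertion that this coset is \emph{entirely} filled. Note also that the even roots are exactly $\Delta_{\bar{0}}=\{\pm\alpha_1,\pm\alpha_2,\pm\alpha_3\}$, where $\alpha_1,\alpha_2,\alpha_3$ form a $\mathbb{Z}$-basis of $Q_{\bar{0}}$, and that $[Q:Q_{\bar{0}}]=2$ by Remark~\ref{lattice}. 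I will prove the cycle $(1)\Rightarrow(3)\Rightarrow(2)\Rightarrow(1)$.

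For $(1)\Rightarrow(3)$, I first invoke Theorem~\ref{equivalence} to pass from cuspidal to torsion-free, so that the monoid generated by $\mbox{inj }M\subseteq\Delta_{\bar{0}}$ is a subgroup $G$ of finite index in $Q$. Since the generators all lie among $\pm\alpha_1,\pm\alpha_2,\pm\alpha_3$, I project onto each coordinate of the basis $\alpha_1,\alpha_2,\alpha_3$: the $i$-th projection of any element of $\mbox{inj }M$ lies in $\{-1,0,1\}$ and is nonzero only for $\pm\alpha_i$. For $G$ to have finite index, its $i$-th projection must be a nonzero subgroup of $\mathbb{Z}$, and for the \emph{monoid} generated to already be a group one cannot obtain $-\alpha_i$ from nonnegative combinations unless $-\alpha_i\in\mbox{inj }M$ itself. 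Hence both $\alpha_i$ and $-\alpha_i$ lie in $\mbox{inj }M$ for each $i$, i.e. $\mbox{inj }M=\Delta_{\bar{0}}$, which is (3).

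For $(3)\Rightarrow(2)$, I use injectivity of the even root vectors to propagate the support: if $M_\nu\neq 0$, then injectivity of $x_{\pm\alpha_i}$ gives $M_{\nu\pm\alpha_i}\neq 0$, so $\nu+Q_{\bar{0}}\subseteq\mbox{Supp }M$. Since $\mbox{Supp }M\subseteq\mu+Q$ and $Q/Q_{\bar{0}}$ has order $2$, the support is either this single $Q_{\bar{0}}$-coset or the full coset $\mu+Q$. The heart of the argument is to exclude the first possibility: every odd root is congruent to $\beta_1$ modulo $Q_{\bar{0}}$ (each has odd coefficient-sum), so if the support were confined to $\nu+Q_{\bar{0}}$ then every odd root vector would send each weight space into an empty weight space, forcing $\mathfrak{g}_{\bar{1}}M=0$. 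Then $\mathfrak{g}_{\bar{0}}M=[\mathfrak{g}_{\bar{1}},\mathfrak{g}_{\bar{1}}]M=0$, hence $\mathfrak{g}M=0$, contradicting that the even root vectors act injectively on the nonzero module $M$. Therefore $\mbox{Supp }M=\mu+Q$, which is (2).

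Finally $(2)\Rightarrow(1)$ is immediate: a full $Q$-coset is a single coset of the finite-index subgroup $Q'=Q$ (of index one), so $M$ is dense and hence cuspidal by Theorem~\ref{equivalence}. I expect the main obstacle to be the exclusion step inside $(3)\Rightarrow(2)$: the torsion-free/lattice bookkeeping only controls the even sublattice $Q_{\bar{0}}$, and one genuinely needs the superalgebra relation $[\mathfrak{g}_{\bar{1}},\mathfrak{g}_{\bar{1}}]=\mathfrak{g}_{\bar{0}}$ together with simplicity of $M$ to rule out the support collapsing onto a single $Q_{\bar{0}}$-coset; the remaining implications are then routine applications of Theorem~\ref{equivalence}.
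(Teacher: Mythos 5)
Your proof is correct. The paper itself states Theorem~\ref{characterize} without any proof, implicitly treating it as the specialization of the Dimitrov--Mathieu--Penkov equivalence (Theorem~\ref{equivalence}) to $D(2,1,\alpha)$; your argument is exactly that specialization carried out in full, and the two details you supply are the right ones: the coordinate argument showing that torsion-freeness forces $\mbox{inj }M=\Delta_{\bar{0}}$ (since the only even roots are $\pm\alpha_1,\pm\alpha_2,\pm\alpha_3$, a monoid generated by a proper subset of them either has infinite index or fails to be a group), and the observation that all odd roots lie in the nontrivial coset of $Q_{\bar{0}}$ in $Q$, so that a support confined to a single $Q_{\bar{0}}$-coset would force $\mathfrak{g}_{\bar{1}}M=0$ and hence, via $[\mathfrak{g}_{\bar{1}},\mathfrak{g}_{\bar{1}}]=\mathfrak{g}_{\bar{0}}$, force $\mathfrak{g}M=0$, contradicting injectivity of the even root vectors. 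One small point you should make explicit: Theorem~\ref{equivalence} is stated for simple \emph{finite} weight modules, so before invoking it you need the fact (established in the paper's introduction, cf.\ Theorem~\ref{small}) that every simple weight module of $D(2,1,\alpha)$ automatically has finite-dimensional weight spaces; with that sentence added, your proof is complete.
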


\begin{corollary}
Let $\mathfrak{g}=D(2,1,\alpha)$. If $M$ is a cuspidal  $\mathfrak{g}$-module, then  $\mbox{dim }M_{\lambda}=\mbox{dim }M_{\mu}$ for all $\lambda - \mu \in Q_{\bar{0}}$.
\end{corollary}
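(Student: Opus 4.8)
The plan is to exploit the injectivity of the even root vectors guaranteed by the cuspidality of $M$, together with the finite dimensionality of the weight spaces. First I would invoke Theorem~\ref{characterize}: since $M$ is cuspidal, the root vector $x_\alpha$ acts injectively on $M$ for every $\alpha\in\Delta_{\bar 0}$. In particular, for each simple even root $\alpha_i\in\Pi_{\bar 0}$, $i=1,2,3$, both $E_i\in\mathfrak{g}_{\alpha_i}$ and $F_i\in\mathfrak{g}_{-\alpha_i}$ act injectively on $M$. Moreover, by Theorem~\ref{small}, $M$ is bounded, so every weight space $M_\mu$ is finite dimensional.

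The key observation is a dimension count between adjacent weight spaces. Fix $\mu\in\mathfrak{h}^*$ and $i\in\{1,2,3\}$. The map $E_i\colon M_\mu\to M_{\mu+\alpha_i}$ is injective, giving $\mbox{dim }M_\mu\le \mbox{dim }M_{\mu+\alpha_i}$, while $F_i\colon M_{\mu+\alpha_i}\to M_\mu$ is injective, giving $\mbox{dim }M_{\mu+\alpha_i}\le\mbox{dim }M_\mu$. Since both spaces are finite dimensional, these two inequalities force $\mbox{dim }M_\mu=\mbox{dim }M_{\mu+\alpha_i}$. Thus the weight multiplicity is unchanged under translation by any simple even root (and hence, by symmetry, by its negative).

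Finally I would recall that $Q_{\bar 0}=\mathbb{Z}\alpha_1\oplus\mathbb{Z}\alpha_2\oplus\mathbb{Z}\alpha_3$ is generated by the simple even roots. Hence, writing $\lambda-\mu=\sum_i n_i\alpha_i$ with $n_i\in\mathbb{Z}$, one passes from $\mu$ to $\lambda$ by a finite sequence of steps of the form $\nu\mapsto\nu\pm\alpha_i$, each of which preserves the weight multiplicity by the previous paragraph. Iterating yields $\mbox{dim }M_\lambda=\mbox{dim }M_\mu$ whenever $\lambda-\mu\in Q_{\bar 0}$, as desired.

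I do not expect a serious obstacle here: once the bidirectional injectivity is in hand, the argument is just a dimension count combined with the fact that $\Pi_{\bar 0}$ generates $Q_{\bar 0}$. The only point requiring a little care is the appeal to boundedness, which ensures the weight spaces are finite dimensional so that mutual injectivity genuinely forces equal dimensions; notably, this route avoids invoking the heavier $(\mathfrak{g},\mathfrak{g}_{\bar 0})$-coherent-family or $\mathfrak{g}_{\bar 0}$-cuspidal structure theory, relying only on the characterization already established in Theorem~\ref{characterize}.
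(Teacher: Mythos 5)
Your proof is correct and is precisely the standard argument implicit in the paper: the corollary is stated there without proof because it follows from Theorem~\ref{characterize} exactly as you describe, via mutual injectivity of $E_i$ and $F_i$ between finite-dimensional weight spaces and the fact that $\Pi_{\bar{0}}$ generates $Q_{\bar{0}}$. No gaps; the appeal to boundedness (or just finiteness of weight spaces, which is part of the definition of a cuspidal module) is the right point to be careful about, and you handled it.
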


For $\mathfrak{g}=D(2,1,\alpha)$ it follows from \cite{G01}, that for typical central characters we have a $1-1$ correspondence between cuspidal  $\mathfrak{g}$-modules and cuspidal $\mathfrak{g}_{\bar{0}}$-modules.  Here we describe cuspidal $\mathfrak{g}_{\bar{0}}$-modules.

\begin{theorem}
Let $\mathfrak{g}_{\bar{0}}=\mathfrak{sl}_2\times \mathfrak{sl}_2\times \mathfrak{sl}_2$. Then the cuspidal $\mathfrak{g}_{\bar{0}}$-modules are as follows. \begin{equation}\label{V} V_a^{\mu}:=V(a_1)^{[\mu_{1}]}\otimes V(a_2)^{[\mu_2]}\otimes V(a_3)^{[\mu_3]}\quad a,\mu\in\mathbb{C}^{3},\ \mu_i\pm a_i,\not\in\mathbb{Z},\ i=1,2,3\end{equation} Moreover, $\mbox{Supp }V_a^{\mu}=Q+\mu$ and $\mbox{deg }V_a^{\mu}=1$.
\end{theorem}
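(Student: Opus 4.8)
The plan is to reduce to the rank-one case, classify the simple cuspidal $\mathfrak{sl}_2$-modules, and then reassemble the three tensor factors. The reduction rests on the fact, already recorded in the introduction, that because $\mathfrak{g}_{\bar 0}=\mathfrak{sl}_2\times\mathfrak{sl}_2\times\mathfrak{sl}_2$ every simple weight $\mathfrak{g}_{\bar 0}$-module factors as $V=V_1\otimes V_2\otimes V_3$ with each $V_i$ a simple weight $\mathfrak{sl}_2$-module. Writing $\alpha_i$ for the positive root of the $i$-th copy of $\mathfrak{sl}_2$, a root vector $x_{\pm\alpha_i}$ acts on $V$ only through its $i$-th slot, so by exactness of tensoring with a vector space over $\mathbb{C}$ it is injective on $V$ if and only if it is injective on $V_i$. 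By Fernando's injectivity criterion for cuspidality of $\mathfrak{g}_{\bar 0}$-modules, $V$ is cuspidal if and only if every $V_i$ is a cuspidal $\mathfrak{sl}_2$-module.

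Second, I would classify the simple cuspidal $\mathfrak{sl}_2$-modules. By the Casimir argument of the introduction each simple weight $\mathfrak{sl}_2$-module has one-dimensional weight spaces, and cuspidality forces both $e$ and $f$ to act injectively, so the support is a single full coset $\nu_i+\mathbb{Z}\alpha_i$ and the module is determined up to isomorphism by this coset together with the scalar by which the Casimir acts. Example~\ref{sl2} realizes all such data: on $V(a)^{[\mu]}=\oplus_{n\in\mathbb{Z}}\mathbb{C}x^{\mu+n}$ one has $e.x^{s}=(a+s)x^{s+1}$ and $f.x^{s}=(a-s)x^{s-1}$, so $e$ is injective exactly when $a+\mu\notin\mathbb{Z}$ and $f$ exactly when $a-\mu\notin\mathbb{Z}$, that is when $\mu\pm a\notin\mathbb{Z}$; letting $a$ and $[\mu]$ vary, these modules exhaust all admissible pairs of Casimir value and support coset. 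Hence a simple $\mathfrak{sl}_2$-module is cuspidal precisely when it is isomorphic to some $V(a)^{[\mu]}$ with $\mu\pm a\notin\mathbb{Z}$.

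Combining the two steps identifies the simple cuspidal $\mathfrak{g}_{\bar 0}$-modules with the list $V_a^{\mu}=V(a_1)^{[\mu_1]}\otimes V(a_2)^{[\mu_2]}\otimes V(a_3)^{[\mu_3]}$ subject to $\mu_i\pm a_i\notin\mathbb{Z}$; simplicity of such a tensor product follows either from the standard behaviour of tensor products of simple countable-dimensional modules or, directly, from the fact that its weight spaces are one-dimensional and all root vectors act injectively, so that any nonzero weight vector generates the whole module. For the support and degree I would use the splitting $\mathfrak{h}^{*}=\mathfrak{h}_1^{*}\oplus\mathfrak{h}_2^{*}\oplus\mathfrak{h}_3^{*}$: each $V(a_i)^{[\mu_i]}$ has support a single coset $\nu_i+\mathbb{Z}\alpha_i$ inside $\mathfrak{h}_i^{*}$, so summing over $i$ shows that $\mbox{Supp }V_a^{\mu}$ is a single full coset of the $\mathfrak{g}_{\bar 0}$-root lattice $Q_{\bar 0}=\sum_i\mathbb{Z}\alpha_i$, namely the coset written $Q+\mu$ in the statement (in the notation of Fernando's theorem, where $Q$ denotes the root lattice of the ambient reductive algebra). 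Since every weight of $V_a^{\mu}$ decomposes uniquely as $\nu_1+\nu_2+\nu_3$ with $\nu_i$ a weight of $V_i$, the weight multiplicity factors as a product of three one-dimensional multiplicities, whence $\dim (V_a^{\mu})_{\nu}=1$ throughout the support and $\mbox{deg }V_a^{\mu}=1$.

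The main obstacle is the exhaustiveness in the second step: Example~\ref{sl2} only exhibits the modules $V(a)^{[\mu]}$ as cuspidal, and one must still know that no other simple cuspidal $\mathfrak{sl}_2$-module exists. This is the classical fact (Mathieu \cite{M00}) that a simple cuspidal $\mathfrak{sl}_2$-module is pinned down by its central character and its support coset; granting it, the remainder is bookkeeping with the tensor decomposition and the direct-sum decomposition of $\mathfrak{h}$.
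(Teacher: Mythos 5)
Your proposal is correct and follows exactly the route the paper intends: the paper states this theorem without proof, treating it as immediate from the tensor-product decomposition of simple weight $\mathfrak{g}_{\bar{0}}$-modules noted in the introduction, the classification of cuspidal $\mathfrak{sl}_2$-modules in Example~\ref{sl2} (Mathieu), and Fernando's injectivity/support criterion. Your write-up simply fills in the same outline — reduction to rank one, exhaustiveness of the $V(a)^{[\mu]}$, simplicity of the tensor product via one-dimensional weight spaces, and the support and degree bookkeeping — with appropriate care at each point.
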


We have two ways to realize cuspidal modules.  The first method is by decomposing the modules $\widetilde{N}$ appearing in Theorem~\ref{decompose}, since each simple subquotient of $\widetilde{N}$ is cuspidal. It follows from the PBW theorem that $\mbox{deg }\widetilde{V_a^{\mu}}=2^7$, so we see from Theorem~\ref{small} that $\widetilde{V_a^{\mu}}$ is far from simple.  Alternatively, one could determine simplicity conditions for the modules $L(\lambda)_{\Gamma}^{\mu}$ appearing in Theorem~\ref{simple}.

Here we calculate the degree of a cuspidal $\mathfrak{g}$-module $L(\lambda)_{\Gamma}^{\mu}$ using the results from Section~\ref{cuspD} and Shapovalov determinants for the module $M(\lambda)$ \cite{G04, GK07, K78}.

\begin{theorem} Let $\mathfrak{g}=D(2,1,\alpha)$, and suppose that $L(\lambda)_{\Gamma}^{\mu}$  is a (simple) cuspidal $\mathfrak{g}$-module for $\lambda\in\mathfrak{h}^{*}$, $\Delta=\Delta^+\cup\Delta^-$, $\Gamma\subset\Delta^{-}_{\bar{0}}$ and $\mu\in\mathbb{C}^{3}$. Then
\begin{enumerate}
\item $\Gamma=\mbox{inj }L(\lambda)=\Delta_{\bar{0}}^{-}$, and if $\Delta^+$ is as in (\ref{triangleD}) then Theorem~\ref{conditions} applies;
 \item $\mbox{deg }L(\lambda)_{\Gamma}^{\mu}=8$ iff $L(\lambda)$ is typical iff  $L(\lambda)$ has graded degree $(8,8)$;
\item  if $L(\lambda)$ is atypical, then $2\leq \mbox{deg }L(\lambda)_{\Gamma}^{\mu}\leq 6$;
\item if $(\lambda,\beta)=0$ for some simple odd root $\beta$, then $\mbox{deg }L(\lambda)_{\Gamma}^{\mu}\leq 4$.
\end{enumerate}
\end{theorem}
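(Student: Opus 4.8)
The plan is to reduce everything to the stable weight multiplicities of the highest weight module $L(\lambda)$. Since localization and twisted localization preserve the degree, $\deg L(\lambda)_{\Gamma}^{\mu}=\deg L(\lambda)$, so it suffices to understand $L(\lambda)$. Once we know $\mbox{inj }L(\lambda)=\Delta_{\bar{0}}^{-}$, each $F_i$ acts injectively on $L(\lambda)$, hence $\dim L(\lambda)_{\mu}\leq\dim L(\lambda)_{\mu-\alpha_i}$: multiplicities are non-decreasing toward the bottom of the support and, being bounded by Corollary~\ref{corHW}, they stabilize. Thus $\deg L(\lambda)$ equals the common stable multiplicity on each $Q_{\bar{0}}$-coset, and the whole theorem becomes a statement about these two numbers.

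For (1), note that $L(\lambda)$ is a highest weight module, so every positive even root vector raises weights (bounded above by $\lambda$) and is therefore locally nilpotent; hence $\mbox{inj }L(\lambda)\subseteq\Delta_{\bar{0}}^{-}$. If $L(\lambda)_{\Gamma}^{\mu}$ is cuspidal then by Theorem~\ref{characterize} every $x_{\alpha_i}$ acts injectively; since on $L(\lambda)$ itself the positive $x_{\alpha_i}$ is locally nilpotent, injectivity after twisted localization can only arise by having inverted $F_i$, i.e. $-\alpha_i\in\Gamma$, for each $i$. Therefore $\Gamma=\Delta_{\bar{0}}^{-}$, and from $\Gamma\subseteq\mbox{inj }L(\lambda)\subseteq\Delta_{\bar{0}}^{-}$ we conclude $\Gamma=\mbox{inj }L(\lambda)=\Delta_{\bar{0}}^{-}$, so Theorem~\ref{conditions} applies. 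For (2), when $L(\lambda)$ is typical the theorem of Section~\ref{cuspD} gives $L(\lambda)=M(\lambda)$, whence Theorem~\ref{thmVerma} forces stable multiplicity $8$ on both cosets, i.e. graded degree $(8,8)$ and $\deg=8$; conversely, for atypical $\lambda$ the Verma is reducible, $L(\lambda)$ is a proper quotient, and $\deg L(\lambda)\leq 6<8$ by the atypical analysis below, giving the three equivalences.

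For (3) and (4) I would locate the radical $N$ of $M(\lambda)$ via the Shapovalov determinant \cite{G04,GK07,K78}, write $\mbox{ch }L(\lambda)=\mbox{ch }M(\lambda)-\mbox{ch }N$, and extract the stable coefficient. Under the cuspidality hypotheses ($c_i\not\in\mathbb{Z}_{\geq 1}$ and at most one $\lambda_i=0$) there are no even singular vectors, so $N$ is controlled entirely by the atypical odd root $\gamma$ with $(\lambda,\gamma)=0$. If $\gamma=\beta_i$ is simple (some $\lambda_i=0$), then $f_iv$ is singular and, since $f_i^2=0$, the map $M(\lambda-\beta_i)\to M(\lambda)$ annihilates the next singular vector, giving $N\cong L(\lambda-\beta_i)$ and $\mbox{ch }L(\lambda)=\sum_{k\geq 0}(-1)^k\mbox{ch }M(\lambda-k\beta_i)=e^{\lambda}\prod_{j\neq i}(1+e^{-\beta_j})(1+e^{-\beta_1-\beta_2-\beta_3})/R_0$; the stable coefficient of this is $4$, which is (4) (and it drops to $2$ when a second atypicality $\lambda_1+\lambda_2+\lambda_3=0$ is present). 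If instead $\gamma=\beta_1+\beta_2+\beta_3$ is not simple (all $\lambda_i\neq 0$, $\sum\lambda_i=0$), then $f_{\gamma}v$ is \emph{not} singular and the singular vector $w$ at $\lambda-\gamma$ is a genuine combination of the five PBW monomials of that weight; the submodule $U(\mathfrak{g})w$ is a proper quotient of $M(\lambda-\gamma)$ that still surjects onto $L(\lambda-\gamma)$, so $\deg L(\lambda)=8-\deg N\leq 8-\deg L(\lambda-\gamma)\leq 6$ by the lower bound applied to the partner $L(\lambda-\gamma)$.

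For the lower bound $\deg\geq 2$ I would argue that a cuspidal $D(2,1,\alpha)$-module is genuinely super: both $M_{\bar 0}$ and $M_{\bar 1}$ are nonzero, since $\mathfrak{g}_{\bar{1}}M=0$ would force $\mathfrak{g}_{\bar{0}}=[\mathfrak{g}_{\bar{1}},\mathfrak{g}_{\bar{1}}]$ to act trivially; thus each $Q_{\bar{0}}$-coset carries a nonzero cuspidal $\mathfrak{g}_{\bar{0}}$-coherent family, and a parity/intertwiner count (equivalently, the evenness of the stable multiplicities visible in the characters above) rules out degree $1$. The main obstacle is the non-simple case of the previous paragraph: because $U(\mathfrak{n}^{-})$ has zero divisors ($f_{\beta}^{2}=0$), the submodule $U(\mathfrak{g})w$ is neither a Verma module nor obviously equal to $L(\lambda-\gamma)$, so producing $w$ explicitly and computing the stable multiplicity of $U(\mathfrak{g})w$ — and hence the exact degree when the atypical root is not simple — is precisely where the Shapovalov-determinant bookkeeping must be carried out with care.
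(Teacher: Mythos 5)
Your overall skeleton --- reduce everything to the stable multiplicities of $L(\lambda)$ via $\deg L(\lambda)_{\Gamma}^{\mu}=\deg L(\lambda)$, prove (1) by playing local nilpotency of the $E_i$ against the injectivity forced by Theorem~\ref{characterize}, get the typical direction of (2) from the theorem ``$L(\lambda)=M(\lambda)$ iff $L(\lambda)$ is typical'' together with Theorem~\ref{thmVerma}, and attack (3)--(4) through singular vectors and Shapovalov determinants --- is exactly the route the paper indicates; note the paper states this theorem without printing a proof, saying only that it uses Section~\ref{cuspD} and Shapovalov determinants, so there is no written argument to compare against. Within that skeleton, your part (1) is sound (modulo spelling out that the three $\mathfrak{sl}_2$'s in $\mathfrak{g}_{\bar 0}$ commute, so twisting by $f_j$ with $-\alpha_j\in\Gamma$, $j\neq i$, cannot destroy local nilpotency of $E_i$ when $-\alpha_i\notin\Gamma$), and part (2) correctly reduces to the atypical upper bound in (3).

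The genuine gaps are in (3). First, your lower bound $\deg\geq 2$ is never established: the ``genuinely super'' observation only shows the support meets both $Q_{\bar 0}$-cosets, i.e.\ $\deg\geq 1$, and the appeal to ``a parity/intertwiner count'' or ``evenness of the stable multiplicities'' is not an argument --- evenness is visible only in the characters you computed for the simple-atypical-root case, and no general principle forcing it is offered. Second, your upper bound $\deg\leq 6$ when the atypical root is $\gamma=\beta_1+\beta_2+\beta_3$ is both circular and miscounted. Circular: it invokes that same unproven lower bound at the shifted weight $\lambda-\gamma$, and part (3) is anyway stated only under the hypothesis that some twisted localization is simple cuspidal, which is not known for $\lambda-\gamma$. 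Miscounted: stable multiplicities add per $Q_{\bar 0}$-coset, $8=s^{L}_{\pm}+s^{N}_{\pm}$, so $\deg L(\lambda)=8-\min_{\pm}s^{N}_{\pm}$, whereas $\deg N=\max_{\pm}s^{N}_{\pm}$; since $w$ lives on the coset opposite to $\lambda$, even granting $\deg L(\lambda-\gamma)\geq 2$ you only get $\deg L(\lambda)\leq 7$. What you actually need is that \emph{both} graded stable multiplicities of $L(\lambda-\gamma)$ (equivalently of $U(\mathfrak{g})w$) are at least $2$, a statement stronger than part (3) itself. Finally, even in the simple-root case the identities $N\cong L(\lambda-\beta_i)$ and $\mathrm{ch}\,L(\lambda)=\sum_{k\geq 0}(-1)^k\,\mathrm{ch}\,M(\lambda-k\beta_i)$ are asserted rather than proven: ruling out additional singular vectors (e.g.\ when $\lambda_1+\lambda_2+\lambda_3=0$ as well, or when some $c_j\in\mathbb{Z}_{\leq 0}$ enlarges the integral Weyl group) is precisely the Shapovalov/Jantzen computation on which the theorem rests, so the cases you did compute carry the same gap you honestly flag for the non-simple case.
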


\section*{Acknowledgements}  I would like to thank Maria Gorelik, Ivan Penkov and Vera Serganova for helpful discussions. Supported in part at the Technion by an Aly Kaufman Fellowship.

\end{document}